\newtheorem{thm}{Theorem}[section]
\newtheorem{cor}[thm]{Corollary}
\newtheorem{lem}[thm]{Lemma}
\newtheorem{prop}[thm]{Proposition}
\theoremstyle{definition}
\newtheorem{rmk}[thm]{Remark}
\newtheorem{ex}[thm]{Example}
\DeclareFontFamily{U}{wncy}{}
\DeclareFontShape{U}{wncy}{m}{n}{<->wncyr10}{}
\DeclareSymbolFont{mcy}{U}{wncy}{m}{n}
\DeclareMathSymbol{\Sha}{\mathord}{mcy}{"58}
\DeclareMathOperator{\lf}{f}
\DeclareMathOperator{\Gal}{Gal}
\DeclareMathOperator{\tors}{tors}
\DeclareMathOperator{\Cl}{Cl}
\DeclareMathOperator{\St}{St}
\DeclareMathOperator{\rad}{rad}
\newcommand{\Z}{\mathbb Z}
\newcommand{\R}{\mathbb R}
\newcommand{\N}{\mathbb N}
\newcommand{\Q}{\mathbb Q}
\newcommand{\C}{\mathbb C}
\renewcommand{\O}{\mathcal O}
\newcommand{\mf}{\mathfrak}
\newcommand{\seq}{\subseteq}
\newcommand{\wo}{\backslash}
\newcommand{\mc}{\mathcal}
\newcommand{\res}{\mathop{\mathrm{Res}}}
\title{Equidistribution of realizable Steinitz classes for cyclic Kummer extensions}
\author{Brody Lynch}
\date{University of Massachusetts \\ \today}
\begin{document}
\maketitle
\begin{abstract}
Let $\ell$ be prime, and $K$ be a number field containing the $\ell$-th roots of unity. We use classical algebraic number theory and some analytic techniques to prove that the Steinitz classes of $\Z/\ell\Z$ extensions of $K$ ordered by relative discriminant are equidistributed among realizable classes in the ideal class group of $K$. For $\ell = 2$, this was proved by Kable and Wright using the deep theory of prehomogeneous vector spaces. Foster proved that Steinitz classes are uniformly distributed between realizable classes for tamely ramified elementary-$m$ extensions using the theory of Galois modules; our approach eliminates this tameness hypothesis. 
\end{abstract}
\tableofcontents

\newpage
\section{Introduction}
Let $K$ be a number field.  It is well-known that its ring of integers $\O_K$ is a free $\Z$-module of rank $[K:\Q]$; equivalently, $\O_K$ has a $\Z$-basis.  Now, suppose $L/K$
is a finite extension of number fields.  If $\O_K$ is a PID, then $\O_L$ also has a
free $\O_K$-basis, but otherwise it may not. The obstruction to $\O_L$ being a free $\O_K$-module is measured by an ideal class of $\O_K$ called the Steinitz class of the extension $L/K$.
In this paper, we study how the Steinitz class distributes among the class group of $K$ for $L/K$ with
a given prime-order Galois group. Before we state our main results, we quickly review some definitions and recall a couple of well-known results.

Let $M$ be a finitely-generated module over a Dedekind domain $A$. Then 
\begin{equation}
\label{Steinitz Decomposition}
    M \cong A^r \oplus \mf a \oplus M_{tors}
\end{equation}
where $\mf a$ is an ideal of $A$ (see \cite[III]{LangA}). Applying (\ref{Steinitz Decomposition}) to the case when $A = \O_K$ and $M = \O_L$, the Steinitz class of the extension $L/K$ is defined as the ideal class of $\mf a$. In \cite{Artin}, Artin showed that $\O_L$ has a relative integral basis over $\O_K$ if and only if the Steinitz class of $\O_L$ over $\O_K$ is trivial.

For a number field $K$ and group $G$, not every ideal class may be the Steinitz class of an extension of $K$ with Galois group $G$. This is shown for $\Z/m\Z$ extensions for $m > 3$ in \cite{Mc}.
For a fixed $K$ and $G$, we use the term ``realizable" to refer to the ideal classes that are Steinitz classes of an extension of $K$ with Galois group $G$. Bekyel showed the existence of an asymptotic formula counting extensions with a given finite abelian Galois group and a given realizable Steinitz class up to squares. Throughout this paper, we will use $N$ to denote the relative ideal norm.

\begin{thm}[Bekyel \cite{Bekyel}]
Let $K$ be a number field and $G$ be a finite abelian group.
Also, let $\mc C \in \Cl(K)$ be the square of a realizable ideal class. Then 
then there exists a positive number $c(K, G, \mc C)$ such that
$$\#\{L/K : \Gal(L/K) \cong G,\ N(\Delta_{L/K}) \leq X,\ \St(\O_L)^2 = \mc C\} \sim c(K, G, \mc C)X^a \log X^b$$
as $X \rightarrow \infty$ for some constants $a$ and $b$ depending on $G$ and $K$.
\end{thm}

 In light of Artin's result, it is natural to investigate what proportion of extensions of some specified Galois group have trivial Steinitz class for a given base field. Relatedly, we might ask in which cases the asymptotic constant in Bekyel's theorem is independent of $\mc C$ and if we can prove such an asymptotic result in terms of $\St(\O_L)$ instead of $\St(\O_L)^2$. An answer to these questions for $S_2$ and $S_3$ extensions was given by Kable and Wright and later for $S_4$ and $S_5$ extensions by Bhargava, Shankar, and Wang.\label{N}

\begin{thm}[Kable-Wright \cite{KW}, Bhargava-Shankar-Wang \cite{Bhargava}]
\label{KW}
Let $K$ be a number field of class number $h_K$ and let $\mc C$ be an ideal class of $K$. If $L$ is a finite
extension of $K$ then denote by $\Delta_{L/K}$ the relative discriminant of $L$ over $K$. For $n = 2, 3, 4, 5,$ we have
$$\lim_{X\rightarrow \infty} \frac{\#\{L/K : \Gal(L/K)\cong S_n,\ N(\Delta_{L/K}) \leq X,\ \St(\O_L) = \mc C\}}{\#\{L/K : \Gal(L/K)\cong S_n,\ N(\Delta_{L/K}) \leq X\}} = \frac 1 {h_K}.$$
\end{thm}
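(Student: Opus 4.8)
The plan is to reduce the counting problem to the geometry of the prehomogeneous vector spaces that parametrize $S_n$-extensions, and throughout to carry the Steinitz class along as one of the $\O_K$-module invariants visible in that parametrization. The cases $n=2,3$ (Kable--Wright) and $n=4,5$ (Bhargava--Shankar--Wang) have the same conceptual skeleton; only the analytic input differs, so I would set them up in parallel. The first ingredient is a dictionary over the Dedekind base $\O_K$: quadratic extensions correspond to quadratic $\O_K$-rings; cubic rings to $\GL_2$-orbits of binary cubic forms valued in a projective rank-$2$ module $M$, i.e.\ pairs $(M,f)$ with $f\in \mathrm{Sym}^3 M^\vee\otimes \wedge^2 M$ (Delone--Faddeev over $\O_K$, in the form due to Wood); and quartic, quintic rings to Bhargava's parametrizations by pairs of ternary quadratic forms and by quintuples of alternating forms, again valued in projective $\O_K$-modules together with their resolvent modules.

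The second step is the \emph{module identity}: in each parametrization, $\St(\O_L)$ is an explicit monomial in the classes $[\wedge^{\mathrm{top}}M_i]$ of the projective modules carrying the forms (and, for $n=4,5$, the resolvent modules). For a cubic ring $R=\O_K\oplus M$ this is just $\St(\O_L)=[\wedge^2 M]^{\pm 1}$, since $\wedge^3 R\cong \wedge^2 M$; for quartic and quintic rings it is a short product of the analogous determinants of the quartic/quintic module and of the cubic/sextic resolvent module. Granting this, fixing the Steinitz class to equal a given class $\mc C$ amounts to fixing the isomorphism types of the modules $M_i$, because rank-$r$ projectives over a Dedekind domain are determined by their determinant. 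Thus the theorem becomes the assertion that the number of field-giving orbits of bounded discriminant attached to the lattice $\bigoplus M_i$ is, to leading order, \emph{independent of which $M_i$ were chosen}, so that the $h$ possible Steinitz classes each receive a $1/h$ share.

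For $n=2,3$ I would prove this via a twisted Shintani/Datskovsky--Wright zeta function. For a character $\chi$ of $\Cl(K)$ set
\[
\xi_\chi(s)=\sum_{L/K}\frac{\chi(\St(\O_L))}{N(\Delta_{L/K})^{s}},
\]
the sum over isomorphism classes of $S_n$-extensions of $K$ (weighting by the resolvent and automorphism factors and excising reducible orbits by inclusion--exclusion over subalgebras, exactly as in Datskovsky--Wright). By the module identity, $\xi_\chi$ is a finite $\C$-linear combination of the classical Shintani zeta functions of the space $\mathrm{Sym}^3 M^\vee\otimes\wedge^2 M$ over the finitely many $M$, so it inherits meromorphic continuation and a simple rightmost pole at $s=1$. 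The crux is that for $\chi\neq 1$ the residue there vanishes: writing the residue adelically as a product of an archimedean volume and local densities, twisting the module $M$ by an idele induces an automorphism of the local groups that preserves each local density and the archimedean volume, so the residue is literally the same for every $M$; averaging $\chi$ over the $h$ modules then annihilates the nontrivial characters. A standard Tauberian theorem (Landau--Delange, applied as in Kable--Wright) turns this pole data into the stated equidistribution, together with a uniformity estimate on the non-maximal tail to pass from cubic rings to cubic fields.

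For $n=4,5$ the analytic input is replaced by Bhargava's geometry of numbers: one counts lattice points of the parametrizing space inside a fundamental domain for the group action, over each module type separately, and then sieves to maximal, primitive, field-giving orbits. Again the module identity reduces matters to showing that the volume of the fundamental domain and the $p$-adic densities, hence the main term, do not depend on the module type — which is the computation underlying BSW, performed adelically so that at most the idele \emph{class} of the twisting datum could enter, and it does not. I expect the genuine obstacle to be the \emph{uniformity} of all of this across module types: controlling the error terms, the reducible and non-maximal loci, and (for $n=4,5$) the cuspidal/large-orbit contributions uniformly in the $M_i$, together with pinning down the precise form of the module identity when resolvent algebras are present, where the bookkeeping of determinants is more delicate than in the cubic case.
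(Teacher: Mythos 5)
The theorem you are asked to prove is cited in this paper (as a result of Kable--Wright for $n=2,3$ and Bhargava--Shankar--Wang for $n=4,5$) rather than proven in full; your sketch reconstructs the prehomogeneous-vector-space strategy of those original sources, and at that level it is a fair summary of what they do. But it is a genuinely different route from the one this paper takes. For $n=2$, the only case the paper actually reproves (Theorem \ref{KaW}), the argument abandons prehomogeneous vector spaces entirely: the paper writes a quadratic extension as $L=K(\sqrt{\gamma})$, derives the closed formula $\St(\O_L)^2 = \mf L_{L/K}/(Q\mf R^2)$, where $\mf L_{L/K}$ is the even part of $\Delta_{L/K}$, $Q$ the even part of $\gamma\O_K$, and $\mf R^2$ the largest odd square divisor of $\gamma\O_K$, parametrizes extensions by four-tuples (a unit class, the ideal class of $\mf R$, the even squarefree part $Q$, and the odd squarefree part $\mf I$), and then reduces equidistribution of Steinitz classes to the classical equidistribution of squarefree ideals among ideal classes together with a congruence/lattice argument at the primes over $2$ for the even part of the discriminant. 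What the paper's approach buys: it is elementary and constructive (one can write down $\gamma$ for each extension with a prescribed Steinitz class), and it extends to wildly ramified $\Z/\ell\Z$ Kummer extensions (Theorem \ref{Main Theorem}), which the tame Galois-module results of Foster and Agboola cannot reach. What your PHVS sketch buys: it covers $n=3,4,5$, where no comparable elementary parametrization of all $S_n$-extensions is available, and it is essentially the only known method in those degrees. One caution if you flesh out your sketch: the ``module identity'' expressing $\St(\O_L)$ as a monomial in the determinants of the parametrizing and resolvent modules is not automatic for $n=4,5$; pinning down the exact exponents and verifying the identity is delicate bookkeeping that should not be waved through.
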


For $n=2,3,5$, this result is sufficient to show equidistribution of Steinitz classes over all degree $n$ extensions because $S_n$ extensions have density 1 in all degree $n$ extensions (as ordered by field discriminant). However, for $n=4$, this density statement is false: Bhargava shows that $D_4$ extensions have a positive density in all degree 4 extensions \cite{BQuart}.

Only counting tamely ramified extensions, Kurt Foster showed equidistribution among realizable classes of elementary abelian $m$-extensions for a given positive integer $m$.

\begin{thm}[Foster \cite{Foster}]
Let $K$ be a number field, $m$ be a positive integer and $G$ be an elementary abelian $m$-group. Also let $\mc C \in R_K$ be a realizable ideal class. Then 
$$\lim_{\rightarrow \infty} \frac{\#\{L/K : \Gal(L/K) \cong G,\ L/K \text{ tame},\  N(\Delta_{L/K}) \leq X,\ \St(\O_L) = \mc C\}}{\#\{L/K :  \Gal(L/K) \cong G, \ L/K \text{ tame},\ N(\Delta_{L/K}) \leq X\}} = \frac 1 {\#R_K}.$$
\end{thm}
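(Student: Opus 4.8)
The plan is to split the argument into a structural step and an analytic step. The structural step produces a closed formula for $\St(\O_L)$ in terms of the ramification of $L/K$ --- available precisely because $L/K$ is tame --- and the analytic step shows that ordering tame $G$-extensions by $N(\Delta_{L/K})$ spreads the resulting classes uniformly over $R$.

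For the structural step I would begin from the classical identity $\St(\O_L)^2 = [\Delta_{L/K}]$ in $\Cl(K)$, which holds because the relative discriminant ideal equals $\mf a^2\,\big(\det \mathrm{Tr}_{L/K}(e_i e_j)\big)$ for a pseudo-basis $\O_L = \O_K e_1 \oplus \cdots \oplus \O_K e_{n-1} \oplus \mf a\, e_n$, and the second factor is a principal fractional ideal. For tame $G$-extensions one can resolve the square: each prime $\mathfrak p$ ramifying in $L$ has cyclic inertia $I_{\mathfrak p}\le G$, the exponent of $\mathfrak p$ in $\Delta_{L/K}$ is determined by $|I_{\mathfrak p}|$ and $|G|$ through the conductor--discriminant formula, and a pseudo-basis adapted to the ramification upgrades this to a formula
$$\St(\O_L) \;=\; \prod_{\mathfrak p\ \text{ramified}} [\mathfrak p]^{\,a(I_{\mathfrak p})}$$
with $a(I_{\mathfrak p})$ an explicit integer depending only on the inertia type (for instance $a = (\ell-1)/2$ when $G\cong\Z/\ell\Z$ with $\ell$ odd). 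Since $G$ is elementary abelian, every ramified prime carries the same inertia type up to isomorphism, so there is essentially one exponent $a$; and by Chebotarev there are primes in every ideal class supporting that ramification, so $R = \Cl(K)^{a}$, an explicit subgroup of $\Cl(K)$.

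For the analytic step, equidistribution over $R$ is equivalent, by character orthogonality, to the bound
$$\sum_{\substack{L/K\ \text{tame},\ \Gal(L/K)\cong G \\ N(\Delta_{L/K})\le B}} \psi\big(\St(\O_L)\big) \;=\; o\!\left(\#\{L/K\ \text{tame} : \Gal(L/K)\cong G,\ N(\Delta_{L/K})\le B\}\right)$$
for every character $\psi$ of $\Cl(K)$ that is non-trivial on $R$. I would prove this through the Dirichlet series $Z_\psi(s)=\sum_{L}\psi(\St(\O_L))\,N(\Delta_{L/K})^{-s}$: parametrizing tame $G$-extensions prime-by-prime --- via class field theory, or Kummer theory since $\mu_m\subset K$ --- expresses $Z_\psi$ as an Euler product $\prod_{\mathfrak p}Z_{\psi,\mathfrak p}(s)$ up to finitely many corrected factors at the places dividing $m$ and the archimedean places, and the formula above shows each $Z_{\psi,\mathfrak p}(s)$ to be a short sum of terms $\psi([\mathfrak p])^{a}\,N(\mathfrak p)^{-cs}$ over the admissible local ramification behaviours, $c$ being the local discriminant exponent. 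Thus $Z_\psi$ differs from the untwisted $Z_1$ only through insertion of the ideal-class character $\psi^{a}\colon [\mathfrak b]\mapsto\psi([\mathfrak b])^{a}$. Applying a Tauberian theorem to $Z_1$, whose behaviour near its abscissa $s_0=1/c$ is that of $\zeta_K(cs)^{C}$ times a holomorphic non-vanishing factor, yields the denominator asymptotic $\sim c'B^{s_0}(\log B)^{C-1}$; meanwhile $\psi^{a}$ is a non-trivial Hecke character (because $\psi$ is non-trivial on $R=\Cl(K)^{a}$), so $Z_\psi$ is $L(cs,\psi^{a})^{C}$ times a holomorphic factor and, since $L(1,\psi^{a})\ne 0,\infty$, stays bounded at $s_0$. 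This gives the displayed $o(\cdot)$ bound, hence the theorem.

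\textbf{Main obstacle.} The technical heart is the last claim of the analytic step: turning the Euler product into the clean comparison $Z_1(s)=\zeta_K(cs)^{C}H_1(s)$ versus $Z_\psi(s)=L(cs,\psi^{a})^{C}H_\psi(s)$ with $H_1,H_\psi$ holomorphic and non-vanishing at $s_0$, and then feeding this into a Delange--Ikehara-type Tauberian theorem whose error term is uniform enough to beat the $(\log B)^{C-1}$ main term. The other essential ingredient --- and the step that genuinely forces the tameness hypothesis --- is the exact Steinitz-class formula: passing from $\St(\O_L)^2=[\Delta_{L/K}]$ to the precise exponent $a$ requires a pseudo-basis, equivalently the Galois-module structure, adapted to the ramification, which is available with $a$ and $c$ depending on group-theoretic data alone only in the tame case; this is also the point that needs separate care when $\ell=2$.
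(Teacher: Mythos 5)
This statement is Foster's theorem and the paper does not prove it --- it is cited from \cite{Foster} as background and motivation, and the paper explicitly notes that ``Foster's work was built on the theory of Galois modules, and the tamely ramified hypothesis was crucial to his argument.'' There is therefore no in-paper proof to compare against.

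Judged on its own terms, your sketch is a hybrid of the Galois-module viewpoint (McCulloh--Fr\"ohlich theory of realizable classes, which is what Foster actually uses to get a closed formula for $\St(\O_L)$ in the tame case) and a Wright-style analytic argument (Dirichlet series, Euler products, and a Tauberian theorem). This is a reasonable reconstruction of the shape of Foster's argument. The paper's own approach to its Theorem~\ref{Main Theorem} is genuinely different from what you propose: instead of Galois modules or Dirichlet series, it derives the Steinitz class directly from Artin's identity $\Delta_{L/K} = \delta_{L/K}\mf a^2$ by computing the trace-form discriminant $\delta_{L/K}$ explicitly via Kummer theory and combining with Daberkow's discriminant formula (Theorem~\ref{Daberkow General}), and it then counts extensions by an explicit Kummer parametrization together with geometry-of-numbers lattice counting in a fundamental domain. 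That direct counting is precisely what lets the paper drop the tameness hypothesis, since Daberkow's formula controls the wild primes above $\ell$.

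A few substantive gaps in your sketch. First, ``resolving the square'' to pass from $\St(\O_L)^2=[\Delta_{L/K}]$ to the prime-by-prime formula $\St(\O_L)=\prod_{\mf p}[\mf p]^{a(I_{\mf p})}$ is not a matter of choosing a nice pseudo-basis; it requires the resolvend machinery of Fr\"ohlich and McCulloh, which is the real content of the structural step. Second, your exponent $a=(\ell-1)/2$ is the cyclic case only; for $G\cong(\Z/\ell\Z)^k$ a tamely ramified prime with inertia of order $\ell$ contributes $\ell^{k-1}(\ell-1)$ to the discriminant exponent, so the relevant exponent is $\ell^{k-1}(\ell-1)/2$, and the identification $R=\Cl(K)^a$ is McCulloh's theorem, not an immediate consequence of Chebotarev. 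Third, the Tauberian step needs the twisted series $Z_\psi$ to be holomorphic and bounded on a half-plane past $s_0$, not merely finite at $s_0$, which requires careful bookkeeping of the Euler factors at places above $m$ and at infinity. These are all fillable, but they are exactly the non-routine parts you flagged as your ``main obstacle.''
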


Along similar lines, Bekyel gave an equidistribution result for realizable classes for Kummer extensions up to squares.
\begin{thm}[Bekyel \cite{Bekyel}]
    Let $\ell \geq 3$ be prime and $K$ be a number field containing the $\ell$-th roots of unity. Let $\Cl(K)^{\ell-1}$ be the subgroup of $(\ell-1)$-powers in $\Cl(K)$ and let $\mc C \in \Cl(K)^{\ell-1}$. Then 
    $$\frac{\#\{L/K : \Gal(L/K) \cong \Z/\ell\Z, N(\Delta_{L/K}) \leq X, \St(\O_L)^2 = \mc C\}}{\#\{L/K : \Gal(L/K) \cong \Z/\ell\Z, N(\Delta_{L/K}) \leq X\}} = \frac 1 {\#\Cl(K)^{\ell-1}}.$$
\end{thm}
Bekyel noted that it would be ideal if this result could be improved to count Steinitz classes instead of squares of Steinitz classes. That is the content of our main result.

There are many different methods to approach problems of this nature. Bekyel uses local methods and Dirichlet series to prove her result. Kable-Wright and Bhargava-Shankar-Wang used the deep theory of prehomogeneous vector spaces. Foster's work was built on the theory of Galois modules, and the tamely ramified hypothesis was crucial to his argument. Agboola \cite{Agboola} extended Foster's approach to all finite abelian extensions, but gave an equidistribution result in terms of the product of ramifying primes instead of the discriminant, still under the assumption of tame ramification. 

In this paper, we use methods from classical algebraic number theory to study the equidistribution of Steinitz classes in $\Z/\ell\Z$ extensions of a number field containing the $\ell$-th roots of unity, which we will call $\ell$-Kummer extensions. This gives a new and simpler proof of Kable and Wright's theorem for $n=2$ and also proves a stronger form of Foster's theorem for cyclic $\ell$-extensions, removing the assumption that $L/K$ be tamely ramified. Furthermore, our argument is constructive in the sense that our proof gives an explicit procedure for writing down a $\gamma \in \O_K$ for each $\Z/\ell\Z$ extension $K(\sqrt[\ell]\gamma)$ of bounded discriminant with a given Steinitz class. Here is the statement of our main theorem.

\begin{thm}
\label{Main Theorem}
    Let $\ell$ be prime and $K$ be a number field containing all $\ell$-th roots of unity. We define $R_K$ to be the subgroup of $\left(\frac{\ell-1}{2}\right)$-powers in $\Cl(K)$ for $\ell > 2$ and $R_K = \Cl(K)$ for $\ell = 2$. Then for any $\mc C \in R_K$,
    $$\lim_{X\rightarrow \infty} \frac{\#\{L/K : \Gal(L/K) \cong \Z/\ell\Z,\ N(\Delta_{L/K}) \leq X,\ \St(\O_L) = \mc C\}}{\#\{L/K : \Gal(L/K) \cong \Z/\ell\Z,\ N(\Delta_{L/K}) \leq X\}} = \frac 1 {\#R_K}.$$
\end{thm}
This is a meaningful improvement on Foster's result because a positive density of extensions with a given Galois group have wild ramification. It is also an improvement of of Bekyel's result as we have replaced $\St(\O_L)^2$ with $\St(\O_L)$. To the best of our knowledge, this is the first complete equidistribution result for Kummer extensions of any prime degree.

We now give an outline of the paper. In Section \ref{SFRQ}, we will first give a formula for determining the Steinitz class of an extension $K(\sqrt[\ell]{\gamma})$ over $K$ in terms of $\gamma$. We will also outline our method of proving the main equidistribution result. In Section \ref{PK}, we justify the validity of this method. This involves giving some preliminary results on relative discriminants, on parametrization of $\Z/\ell\Z$ extensions, and on counting $\ell$-power-free ideals satisfying certain ideal class conditions. We prove the Steinitz class formula in Section \ref{FS}. In Section \ref{KE}, we give the proof of equidistribution of Steinitz classes. 

Our methods can be extended to extensions with more general abelian Galois groups. We are currently investigating similar equidistribution results for  elementary-$\ell$ extensions and for cyclic $m$-extensions for $m$ not necessarily prime. We will report our results in future papers.

\section{Outline of proof}
\label{SFRQ}
We start this section by giving a formula for the Steinitz class of an $\ell$-Kummer extension $K(\sqrt[\ell]{\gamma})$. We will give the proof in Section \ref{FS}. Before we state the formula, we describe some general notation that will persist throughout the paper. Given $\gamma \in \O_K$, we write
$$\gamma\O_K = \mf Q \prod_{j=1}^k \mf p_j^{a_j}$$
where $\mf Q$ is an ideal all of whose prime factors divide $\ell \O_K$ and $\mf p_i$ are prime ideals that do not divide $\ell\O_K$. Letting $a_j = \ell b_j + r_j$ with $0\leq r_j \leq \ell$, we define
$$\mf R = \prod_{j=1}^k \mf p_j^{b_j}$$
and
$$\mf I_i = \prod_{r_j = i} \mf p_j.$$
Then
$$\gamma\O_K = \mf Q \mf R^{\ell} \prod_{i=1}^{\ell-1} \mf I_i^i.$$
We will call $\mf R^{\ell}$ the ``$\ell$-power-part" of $\gamma\O_K$, $\mf Q$ the ``$\ell$-factor" of $\gamma\O_K$, and $\mf I_i^i$ the ``$i$-power-part of $\gamma\O_K$" ($\mf I_1$ may also be called the squarefree part). Collectively, $\mf R^{\ell} \prod_{i=1}^{\ell-1} \mf I_i^i$ will be called the ``prime-to-$\ell$-factor". In particular, for notational convenience, we will let $\mf L_{L/k}$ be the $\ell$-factor of $\Delta_{L/K}$ and $\mf F_{L/K}$ be the prime-to-$\ell$-factor of $\Delta_{L/K}$. We also let $B$ be the set of possible values for $\mf L_{L/K}$ of $\ell$-Kummer extensions $L/K$. Finally, we let $U = \O_K^\times$. \label{F} \label{L} \label{U}

\begin{prop}
\label{Steinitz Formula Prop}
    Let $\ell$ be prime and $K$ be a number field containing the $\ell$-roots of unity. We also define $[\mf A]$ to be the ideal class of the ideal $\mf A$. Then for any $\ell$-Kummer extension $L = K(\sqrt[\ell]{\gamma})$ for $\gamma \in \O_K$,
    \begin{equation}
    \label{Steinitz Formula}
    \St(\O_L) = \left[\sqrt{\frac{\mf L_{L/K}}{\left(\mf Q\mf R^\ell \prod_{i=2}^{\ell-1} \mf I_i^{i-1}\right)^{\ell-1}}}\right].
    \end{equation}
where $\mf Q$ is the $\ell$-factor of $\gamma\O_K$, $\mf R^\ell$ is the $\ell$-power-part of $\gamma\O_K$ and
$\mf I_i^i$ is  the $i$-power-part of $\gamma\O_K$ for $1\leq i \leq \ell-1$.
\end{prop}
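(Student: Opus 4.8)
The plan is to separate the claim into a general, ``soft'' identity and a local ramification computation. The soft input is the standard fact that $\St(\O_L)^2 = [\Delta_{L/K}]$ in $\Cl(K)$ for any separable $L/K$: using the structure theory of (\ref{Steinitz Decomposition}) we may write $\O_L = \bigoplus_{j=1}^{\ell}\mf a_j\omega_j$ with fractional ideals $\mf a_j$ and a $K$-basis $\omega_j$ of $L$, so that $\St(\O_L) = [\mf a_1\cdots\mf a_\ell]$ by definition, while the discriminant ideal of the lattice $\bigoplus_j\mf a_j\omega_j$ equals $(\mf a_1\cdots\mf a_\ell)^2\,\bigl\langle \det\bigl(\operatorname{Tr}_{L/K}(\omega_i\omega_j)\bigr)\bigr\rangle$; the last factor is principal, being generated by an element of $K$, so comparing classes yields $[\Delta_{L/K}] = 2[\mf a_1\cdots\mf a_\ell] = \St(\O_L)^2$. (Equivalently, writing $\theta=\sqrt[\ell]{\gamma}$ and letting $\mf f$ be the module index of $\O_K[\theta]$ in $\O_L$, this is the statement that $\St(\O_L)=[\mf f]^{-1}$ together with $\mf f^{2}\Delta_{L/K} = \langle\operatorname{disc}(1,\theta,\dots,\theta^{\ell-1})\rangle = \langle\pm\ell^{\ell}\gamma^{\ell-1}\rangle$, a principal ideal.) Granting this, it remains to compute the class of $\Delta_{L/K} = \mf L_{L/K}\mf F_{L/K}$.

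Since $\mf L_{L/K}$ already appears in the formula, the work is to express the $\ell$-free part $\mf F_{L/K}$ in terms of $\gamma$, and this is purely local at the primes $\mf p \nmid \ell$. For such $\mf p$ the extension $L/K$ is at worst tamely ramified at $\mf p$ (degree $\ell$, residue characteristic $\neq \ell$). If $\ell \mid v_{\mf p}(\gamma)$, then $K_{\mf p}(\sqrt[\ell]{\gamma}) = K_{\mf p}(\sqrt[\ell]{u})$ for a unit $u$, and $\operatorname{disc}(X^{\ell}-u) = \pm\ell^{\ell}u^{\ell-1}$ is a unit at $\mf p$, so $\O_{K_{\mf p}}[\sqrt[\ell]{u}]$ is maximal and $\mf p$ is unramified; such $\mf p$ contribute nothing to $\mf F_{L/K}$. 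If $\ell \nmid v_{\mf p}(\gamma)$, then for any $\mf P \mid \mf p$ the valuation $v_{\mf P}(\sqrt[\ell]{\gamma}) = e(\mf P\mid\mf p)\,v_{\mf p}(\gamma)/\ell$ is an integer, forcing $e(\mf P\mid\mf p) = \ell$; hence $\mf p$ is totally and tamely ramified, the different exponent is $e-1 = \ell - 1$, and $v_{\mf p}(\Delta_{L/K}) = \ell - 1$. Therefore $\mf F_{L/K} = \bigl(\prod_{i=1}^{\ell-1}\mf I_i\bigr)^{\ell-1}$, since $\mf I_i$ is precisely the product of the primes $\mf p \nmid \ell$ with $v_{\mf p}(\gamma) \equiv i \pmod{\ell}$.

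Finally I would combine the two: in the additively written class group, $\St(\O_L)^2 = [\mf L_{L/K}] + (\ell-1)\bigl[\prod_{i=1}^{\ell-1}\mf I_i\bigr]$. The decomposition $\gamma\O_K = Q\,\mf R^{\ell}\prod_{i=1}^{\ell-1}\mf I_i^{\,i}$ exhibits a principal ideal, so $(\ell-1)\bigl[\prod_i \mf I_i\bigr] = -(\ell-1)\bigl[Q\,\mf R^{\ell}\prod_{i=1}^{\ell-1}\mf I_i^{\,i-1}\bigr]$; substituting this and dropping the trivial term $\mf I_1^{0}$ produces exactly (\ref{Steinitz Formula}). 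The only genuinely substantive step is the tame local computation of $\mf F_{L/K}$; nothing about the wildly ramified primes above $\ell$ enters, which is precisely what allows $\mf L_{L/K}$ to be carried along uncomputed --- and that, in turn, is why the identity is for $\St(\O_L)^2$ rather than for $\St(\O_L)$ itself.
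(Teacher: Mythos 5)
Your proof is correct and follows the same route as the paper's. The ``soft input'' you isolate is precisely Artin's Theorem~\ref{Steinitz Computation}, which the paper cites and then feeds with the explicit trace-form determinant $\delta_{L/K}=\pm\ell^\ell\gamma^{\ell-1}$; your module-index reformulation $\mf f^{2}\Delta_{L/K}=\langle\pm\ell^\ell\gamma^{\ell-1}\rangle$, $\St(\O_L)=[\mf f]^{-1}$, is an equivalent packaging that avoids writing out the $\ell\times\ell$ determinant. Likewise your tame local computation of $\mf F_{L/K}$ re-derives exactly the prime-to-$\ell$ half of Daberkow's Theorem~\ref{Daberkow General}, which the paper simply quotes, and the final substitution of $\gamma\O_K=Q\mf R^\ell\prod_i\mf I_i^i$ is identical. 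One thing to watch: Remark~2.3 insists that \eqref{Steinitz Formula} is an equality of \emph{fractional ideals}, not merely of classes, and that stronger reading is genuinely used downstream --- in Theorem~\ref{KaW} the Steinitz class is recovered as the class of the literal square root $\sqrt{\mf Q/Q}\,\mc R^{-1}$, which pins $\mc R$ down exactly rather than only up to $2$-torsion in $\Cl(K)$. Your write-up stops at the class level; but your own module-index identity already yields the sharper statement once one observes, as the paper does, that $\mu_\ell\subset K$ forces $\ell^\ell\O_K$ to be the square of a \emph{principal} ideal, so that discarding it does not disturb the class of the square root.
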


\begin{rmk}
   The fractional ideal inside the square root on the right-hand-side will soon be shown to be a square. Therefore, it has a unique square root. The ideal class of this square root is the Steinitz class.
\end{rmk}

Note that the only term in this formula that is not a factor of $\gamma\O_K$ is $\mf L_{L/K}$. In Section \ref{PK}, we will discuss how to compute $\mf L_{L/K}$ and show that $B$ is finite. In Section \ref{KE}, we will prove that for a fixed $\mf B \in B$, there is equidistribution of Steinitz classes for $\ell$-Kummer extensions with $\mf L_{L/K} = \mf B$. Then, summing over all finitely many $\mf B \in B$ will give our desired equidistribution result. \label{mfQ}

\begin{figure}
    \centering
    \begin{tikzcd}
    \draw[step=2.0,black,thin] (0,0) grid (6,4);
    \draw (0,3)--(6,3);

    \begin{scope}[thick,decoration={calligraphic brace, amplitude=8pt}]
  \draw[decorate](-0.2,0) -- (-0.2,2) node(Q1) [midway, xshift=-2em] {\mf B_1};
  \draw[decorate](-0.2,2) -- (-0.2,3)  node(Q2) [midway, xshift=-2em] {\vdots};
  \draw[decorate](-0.2,3) -- (-0.2,4)  node(Q3) [midway, xshift=-2em] {\mf B_m};
  \draw[decorate](2,-0.2)--(0,-0.2) node(C1) [midway, yshift=-2em] {\mc C_1}; 
  \draw[decorate](4,-0.2)--(2,-0.2) node(C2) [midway, yshift=-2em] {\mc \dots}; 
  \draw[decorate](6,-0.2)--(4,-0.2) node(C3) [midway, yshift=-2em] {\mc C_n};
 \end{scope} 
\end{tikzcd}
    \caption{$\ell$-Kummer extensions $L/K$ partitioned by possible values of $\mf L_{L/K}$ and by $\St(\O_L)$}
    \label{Rectangle}
\end{figure}
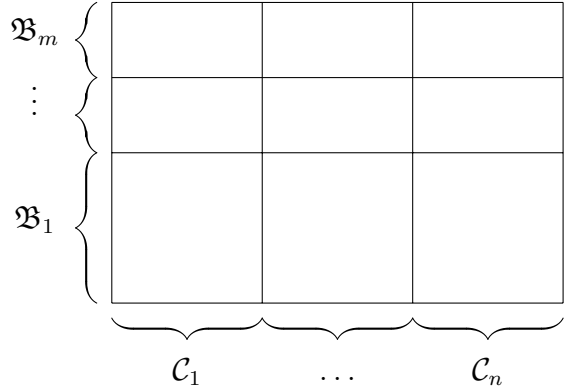

We now outline how we will find the density $\ell$-Kummer extensions with a given Steinitz class. We can envision the set of $\ell$-Kummer extensions with $N(\Delta_{L/K}) \leq X$ as partitioned in the rectangle in Figure \ref{Rectangle}. We can partition the rectangle into rows and columns, where the rows represent the $\mf L_{L/K}$ of the extensions $L/K$, and the columns represent the Steinitz class of the extensions. The lengths and widths of the rows and columns represent the density of extensions with a given $\mf L_{L/K}$ and $\St(\O_L)$ respectively. To justify and use this partitioning, we will prove the following results: 

\begin{enumerate}
    \item In a given column $\mc C_j$, each row $\mf B_i$ has a density $\rho^{\mc C_j}_{\mf B_i}$ (Proposition \ref{l-Density}).
    \item The density $\rho^{\mc C_j}_{\mf B_i}$ is independent of $\mc C_j$ (Corollary \ref{SteinitzIndependence}).
    \item For a fixed row $\mf B_i$, each column has density $\frac{1}{\#R_K}$ (Theorem \ref{Main Theorem}).
\end{enumerate} 

To be more explicit, for a given number field $K$, a prime $\ell$, a possible value $\mf B$ of $\mf L_{L/K}$ and an ideal class $\mc C \in R_K$, we define
\begin{align*}
E(X) &= \{L/K : \Gal(L/K) \cong \Z/\ell\Z,\ N(\Delta_{L/K}) \leq X\},\\
E^{\mc C}(X) &= \{L/K \in E(X) : \St(\O_L) = \mc C\},\\
E_{\mf B}(X) &= \{L/K \in E(X) : \mf L_{L/K} = \mf B\},\\
E_{\mf B}^{\mc C}(X) &= E_{\mf B}(X) \cap E^{\mc C}(X).
\end{align*}
\label{E}
From these definitions, we have that
\begin{equation}
\label{SumLPart}
    \#E^{\mc C}(X) = \sum_{\mf B \in B} \#E_{\mf B}^{\mc C}(X).
\end{equation}
Therefore, if we can show that for every $\mf B \in B$,
$\#E_{\mf B}^{\mc C}(X)$ is independent of the choice of $\mc C \in R_K$, we can get the desired equidistribution result by summing over $\mf B \in B$.

To compute $\#E_{\mf B}^{\mc C}(X)$ we must parametrize $\ell$-Kummer extensions $K(\sqrt[\ell]{\gamma})$ (Corollary \ref{Choices}). Then for each $\mf B_i \in B$, we must compute $\rho_{\mf B_i}$, the proportion of extensions that have $\mf L_{L/K} = \mf B_i$ or the length of the rows in Figure \ref{Rectangle}. More explicitly, we will show that for $\mf B \in B$ and realizable ideal class $\mc C \in R_K$,
\begin{equation}
    \label{rho_q}
    \rho_{\mf B}^{\mc C} = \lim_{X\rightarrow \infty} \frac{\#\{L/K: \Gal(L/K) \cong \Z/\ell\Z,\ N(\mf F_{L/K}) \leq X^{\ell-1},\ \St(\O_L) = \mc C,\ \mf L_{L/K} = \mf B\}}{\#\{L/K: \Gal(L/K) \cong \Z/\ell\Z,\  N(\mf F_{L/K}) \leq X^{\ell-1},\ \St(\O_L) = \mc C\}}
\end{equation} exists in Proposition \ref{l-Density}. Note that we use $X^{\ell-1}$ instead of $X$ as in \cite{CohenDisc} for convenience. We also will show that $\rho_{\mf B}$ is independent of Steinitz class, which is to say for any realizable ideal class $\mc C \in R_K$, \begin{equation}
    \label{rho_q indep}
     \lim_{X\rightarrow \infty} \frac{\#\{L/K: \Gal(L/K) \cong \Z/\ell\Z,\ N(\mf F_{L/K}) \leq X^{\ell-1},\ \mf L_{L/K} = \mf B\}}{\#\{L/K: \Gal(L/K) \cong \Z/\ell\Z,\ N(\mf F_{L/K}) \leq X^{\ell-1}\}} = \rho_{\mf B}^{\mc C}
\end{equation}

In light of this result, we will generally use $\rho_{\mf B}$ without specifying a $\mc C$. We will use $\rho_{\mf B}$ to compute $\#E_{\mf B}(X)$ and then use \eqref{rho_q indep} to restrict this computation to find $\#E_{\mf B}^{\mc C}(X)$. This will show that $\#E_{\mf B}^{\mc C}(X) = \frac {\#E_{\mf B}(X)} {\#R_K}.$

\section{Preliminary results on $\ell$-Kummer extensions}
\label{PK}
In this section, we state and prove results on $\ell$-Kummer extensions that will be useful throughout the paper. Specifically, we will state a formula for the factorization of the relative discriminant of an $\ell$-Kummer extension (Theorem \ref{Daberkow General}), give a parametrization of $\ell$-Kummer extensions (Proposition \ref{Kummer Isom Class}) and describe how they correspond to choices of $\gamma \in \O_K$ (Corollary \ref{Choices}), compute the density of $\ell$-Kummer extensions with a given Steinitz class and $\ell$-factor of the discriminant (Proposition \ref{l-Density}), and argue that this density is independent of the Steinitz class (Corollary \ref{SteinitzIndependence}).

\subsection{Parametrizing $\ell$-Kummer extensions}

First, we recall a result on computing the $\ell$-factor and prime-to-$\ell$-factor of the discriminant of an $\ell$-Kummer extension.

\begin{thm}[Daberkow \cite{D}]
    \label{Daberkow General}
    Let $K$ be a number field containing the $\ell$-roots of unity. For $\gamma \in \O_K$ let $L = K(\sqrt[\ell]{\gamma}).$
    For any prime ideal $\mf p$ not dividing $\ell\O_K$,
    \begin{enumerate}
        \item  If $\nu_{\mf p}(\gamma) \not \equiv 0 \pmod \ell$, then $\nu_{\mf p}(\Delta_{L/K}) = \ell - 1.$

        \item If $\nu_{\mf p}(\gamma) \equiv 0 \pmod \ell$, then $\nu_{\mf p}(\Delta_{L/K}) = 0.$
    \end{enumerate}
    For any prime ideal $\mf q$ dividing $\ell\O_K$,
    
    \begin{enumerate}
    \item If $\nu_{\mf q}(\gamma) \not \equiv 0 \pmod \ell$, then $\nu_{\mf q}(\Delta_{L/K}) = (\ell - 1)+ \ell \nu_{\mf q}(\ell).$

    \item If $\nu_{\mf q}(\gamma) \equiv 0 \pmod \ell$, then let 
    $$s = \max\{0 < m \leq \ell \nu_{\mf q}(\ell) : \exists c \in \O_K, c^\ell \equiv \gamma \pmod{\mf q^m}\}.$$
    If $s = \ell \nu_{\mf q}(\ell)$, then $\nu_{\mf q}(\Delta_{L/K}) = 0$. Otherwise, $\nu_{\mf q}(\Delta_{L/K}) = (\ell-1)(\ell \nu_{\mf q}(\ell) - s + 1).$
    \end{enumerate}
\end{thm}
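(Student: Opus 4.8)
The plan is to establish Theorem \ref{Daberkow General} by a local analysis at each prime, separating the tame case (primes not dividing $\ell\O_K$) from the wild case. Since the relative discriminant is the norm of the relative different, $\Delta_{L/K} = N_{L/K}(\mf d_{L/K})$, and both the different and the discriminant are determined locally, I would fix a prime $\mf p$ of $\O_K$ and compute $\nu_{\mf p}(\Delta_{L/K})$ by passing to the completion $K_{\mf p}$ and analyzing $L \otimes_K K_{\mf p}$. Write $n = \nu_{\mf p}(\gamma)$; since $L = K(\sqrt[\ell]{\gamma})$ and $\ell$ is prime, either $\ell \mid n$ or $\gcd(\ell,n)=1$.

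\textbf{Tame case ($\mf p \nmid \ell\O_K$).} If $\ell \mid n$, write $n = \ell k$ and replace $\gamma$ by $\gamma' = \gamma \pi^{-\ell k}$ for a uniformizer $\pi$ at $\mf p$; this does not change $L$ and makes $\gamma'$ a $\mf p$-unit, so $\mf p$ is unramified in $L$ and contributes $0$, giving case (2). If $\gcd(\ell, n) = 1$, then after a similar adjustment one reduces to $\nu_{\mf p}(\gamma) = 1$ (or any residue coprime to $\ell$), and $L_{\mf p}/K_{\mf p}$ is totally tamely ramified of degree $\ell$ — here I would invoke the standard fact that a tamely totally ramified extension of degree $e$ has different exponent $e-1$, so $\nu_{\mf p}(\mf d) = \ell - 1$ and, since $\mf p$ is totally ramified, $\nu_{\mf p}(\Delta_{L/K}) = \ell - 1$, which is case (1).

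\textbf{Wild case ($\mf q \mid \ell\O_K$).} This is where the real work lies and where I expect the main obstacle. If $\nu_{\mf q}(\gamma) \not\equiv 0 \pmod \ell$, the extension is totally (wildly) ramified; one computes the different exponent of a wildly ramified degree-$\ell$ extension via the ramification filtration / the conductor-discriminant formula, obtaining $\nu_{\mf q}(\mf d) = (\ell-1) + \ell\,\nu_{\mf q}(\ell)$ after normalizing $\gamma$ to be a unit, which gives the stated formula. If $\nu_{\mf q}(\gamma) \equiv 0 \pmod \ell$, we may assume $\gamma$ is a unit at $\mf q$, and the ramification is governed by how well $\gamma$ is approximated by $\ell$-th powers modulo powers of $\mf q$; this is exactly what the quantity $s = \max\{0 < m \le \ell\,\nu_{\mf q}(\ell) : \exists c,\ c^\ell \equiv \gamma \pmod{\mf q^m}\}$ measures. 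If $s = \ell\,\nu_{\mf q}(\ell)$ then $\gamma$ is an $\ell$-th power locally (by a Hensel-type argument, since $\ell\,\nu_{\mf q}(\ell)$ exceeds the Hensel bound for the polynomial $x^\ell - \gamma$), so $\mf q$ splits/is unramified and contributes $0$; otherwise one changes variables $\sqrt[\ell]{\gamma} \mapsto (\sqrt[\ell]{\gamma} - c)$ to put the extension in a standard Artin–Schreier–type normal form with a uniformizer-valuation controlled by $\ell\,\nu_{\mf q}(\ell) - s$, and reads off the break in the ramification filtration; the different exponent then comes out to $(\ell-1)(\ell\,\nu_{\mf q}(\ell) - s + 1)$.

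\textbf{The main obstacle} is the wild case with $\nu_{\mf q}(\gamma) \equiv 0$: one must carefully relate the approximation exponent $s$ to the unique break $t$ in the ramification filtration of $L_{\mf q}/K_{\mf q}$, and then apply the formula $\nu_{\mf q}(\mf d) = (\ell-1)(t+1)$ for a degree-$\ell$ extension with a single break at $t$. Making the substitution $\beta = \sqrt[\ell]{\gamma} - c$ precise — checking that the optimal $c$ achieving $s$ yields $\nu(\beta^\ell) = \ell\,\nu_{\mf q}(\ell) - s$ in a way that translates into $t = \ell\,\nu_{\mf q}(\ell) - s$ — requires some care with valuations of binomial coefficients $\binom{\ell}{j}$, all of which are divisible by $\ell$ for $0 < j < \ell$. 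Rather than reprove this from scratch, I would cite Daberkow's thesis \cite{D} (or the analogous computations in the literature on Kummer extensions of prime degree) for the wild local discriminant formula, and restrict the exposition here to stating it and perhaps sketching the tame case, which is elementary.
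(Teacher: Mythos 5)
The paper does not prove this theorem; it is cited directly from Daberkow \cite{D} and used as a black box. Your proposal, which sketches the local argument and ultimately defers to \cite{D} for the hard subcase, is therefore consistent with (indeed, more detailed than) the paper's own treatment. Two slips are worth correcting. First, in the wild case with $\nu_{\mf q}(\gamma)\not\equiv 0\pmod{\ell}$ you cannot ``normalize $\gamma$ to be a unit'': the valuation being prime to $\ell$ is exactly what prevents this. The correct reduction is to $\nu_{\mf q}(\gamma)=1$ (replace $\gamma$ by $\gamma^m$ with $m\nu_{\mf q}(\gamma)\equiv 1\pmod{\ell}$ and then divide by an appropriate $\ell$-th power of a local uniformizer); after that, $\sqrt[\ell]{\gamma}$ is a uniformizer of the completion of $L$ at the prime $\mf Q$ above $\mf q$, the local ring of integers is generated by $\sqrt[\ell]{\gamma}$, and the different exponent is read off as $\nu_{\mf Q}\bigl(\ell\,(\sqrt[\ell]{\gamma})^{\ell-1}\bigr)=\ell\nu_{\mf q}(\ell)+(\ell-1)$.

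Second, your Hensel argument for the case $s=\ell\nu_{\mf q}(\ell)$ only works when $\ell>2$. The lifting criterion for $x^\ell-\gamma$ with $\gamma$ a unit needs $\nu(c^\ell-\gamma)>2\nu_{\mf q}(\ell)$, and $\ell\nu_{\mf q}(\ell)>2\nu_{\mf q}(\ell)$ fails for $\ell=2$. Indeed, for $\ell=2$ one can have $s=2\nu_{\mf q}(2)$ with $\gamma$ a non-square unit: take $\gamma=5$ in $\Q_2$, where $s=2$ but $\Q_2(\sqrt{5})$ is the unramified quadratic extension. The correct statement is that $s=\ell\nu_{\mf q}(\ell)$ forces the local extension at $\mf q$ to be unramified (possibly nontrivial), which still yields $\nu_{\mf q}(\Delta_{L/K})=0$. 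Your hedge ``splits/is unramified'' reaches the right conclusion, but the Hensel justification as written does not cover $\ell=2$.
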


We also prove the following elementary result for lack of a convenient reference.

\begin{lem}
    \label{Reduction General} Let $K$ be a number field and $\mf P$ be an $\ell$-power-free ideal of $\O_K$. Let $\gamma_1\O_K = \mf I_1^\ell  \mf P$ and $\gamma_2\O_K = \mf I_2^\ell \mf P$, where $\mf I_1$ and $\mf I_2$ are ideals of $\O_K$ in the same ideal class. Then $K(\sqrt[\ell]{\gamma_1})=K(\sqrt[\ell]{\alpha \gamma_2})$ for some $\alpha \in U$.
\end{lem}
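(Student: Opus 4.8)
The plan is to reduce the problem to the statement that $\gamma_1$ and $\alpha\gamma_2$ generate the same Kummer extension, which by Kummer theory happens precisely when $\gamma_1/(\alpha\gamma_2) \in (K^\times)^\ell$ for some unit $\alpha$; equivalently, when the ratio $\gamma_1/\gamma_2$ lies in $U \cdot (K^\times)^\ell$. So the goal is to produce a unit $\alpha$ and an element $\beta \in K^\times$ with $\gamma_1 = \alpha\,\beta^\ell\,\gamma_2$.

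**The key computation.** First I would compute $\frac{\gamma_1}{\gamma_2}\O_K = \frac{\mf I_1^\ell \mf P}{\mf I_2^\ell \mf P} = \left(\frac{\mf I_1}{\mf I_2}\right)^\ell$, a fractional ideal that is an $\ell$-th power. Now the hypothesis that $\mf I_1$ and $\mf I_2$ lie in the same ideal class means there is some $\lambda \in K^\times$ with $\mf I_1 = \lambda \mf I_2$ as fractional ideals, so $\frac{\gamma_1}{\gamma_2}\O_K = (\lambda\O_K)^\ell = \lambda^\ell \O_K$. Therefore $\frac{\gamma_1}{\gamma_2 \lambda^\ell}\O_K = \O_K$, which says $\frac{\gamma_1}{\gamma_2\lambda^\ell}$ is a unit; call it $\alpha \in U$. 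Then $\gamma_1 = \alpha \lambda^\ell \gamma_2$, and since $\lambda^\ell \in (K^\times)^\ell$, the elements $\gamma_1$ and $\alpha\gamma_2$ differ by an $\ell$-th power in $K^\times$. Hence $K(\sqrt[\ell]{\gamma_1}) = K(\sqrt[\ell]{\alpha\gamma_2})$, because adjoining an $\ell$-th root of an element is unchanged under multiplication by $\ell$-th powers (one can take $\sqrt[\ell]{\gamma_1} = \lambda\,\sqrt[\ell]{\alpha\gamma_2}$ for a compatible choice of roots).

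**Loose ends.** A couple of small points need care. One should note $\gamma_1, \gamma_2 \neq 0$, so that the fractional ideals and the field extensions make sense; this is implicit since we are talking about degree-$\ell$ Kummer extensions, though if $\gamma_i$ were an $\ell$-th power the ``extension'' would be trivial — the argument above still goes through formally in that degenerate case. Also, $\lambda$ is only determined up to a unit, but that ambiguity is harmless: replacing $\lambda$ by $u\lambda$ merely replaces $\alpha$ by $u^{-\ell}\alpha$, still a unit. Finally, the identity $K(\sqrt[\ell]{\gamma_1}) = K(\sqrt[\ell]{\alpha\gamma_2})$ uses that $K$ contains the $\ell$-th roots of unity only insofar as we want this to be a genuine $\Z/\ell\Z$-extension; the equality of fields itself holds over any base field.

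**Main obstacle.** There isn't a serious obstacle here — this is genuinely an elementary lemma, and the only ``work'' is the bookkeeping translating ``same ideal class'' into ``differ by a principal fractional ideal'' and then recognizing the leftover factor as a unit. If anything, the subtle point worth stating explicitly is \emph{why} we cannot just take $\alpha = 1$: the element $\lambda^\ell\gamma_2$ generating the same ideal as $\gamma_1$ need not equal $\gamma_1$ on the nose, only up to a unit, and that unit is exactly the $\alpha$ in the statement. This is precisely the slack that later forces one to range over units (or a set of coset representatives of $U/U^\ell$) when enumerating Kummer extensions.
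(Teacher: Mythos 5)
Your proof is correct and follows essentially the same route as the paper: translate ``same ideal class'' into a $\lambda\in K^\times$ with $\mf I_1 = \lambda\mf I_2$, deduce $\gamma_1\O_K = \lambda^\ell\gamma_2\O_K$, and identify the leftover factor $\gamma_1/(\lambda^\ell\gamma_2)$ as the required unit $\alpha$. The only difference is that you spell out the Kummer-theoretic justification and the loose ends more carefully (the paper's one-line proof even has a small typo, writing $\alpha k^2$ where $\alpha k^\ell$ is meant).
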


\begin{proof}
    Since $\mf I_1$ and $\mf I_2$ are in the same ideal class, there exists some $k \in K^\times$ such that $\mf I_1 = k \mf I_2$. Therefore, $\gamma_1\O_K = k^\ell \gamma_2\O_K$ so for the correct choice of unit in $U$, $\gamma_1 = \alpha k^\ell \gamma_2$. Therefore,
    $K(\sqrt[\ell]{\gamma_1}) =K(\sqrt[\ell]{\alpha k^\ell \gamma_2}) = K(\sqrt[\ell]{\alpha \gamma_2}).$
\end{proof}

Combining Theorem \ref{Daberkow General} and Lemma \ref{Reduction General}, we get the following result on isomorphism classes of $\ell$-Kummer extensions. To simplify the notation, for an ideal $\mf A$, we define $\lf(\mf A)$ to be the unique $\ell$-power-free part of $\mf A$ and use $[\mf A]$ to denote the ideal class of $\mf A$. \label{f}

\begin{prop}
\label{Kummer Isom Class}
    Let $\gamma_1$ and $\gamma_2$ be elements of a number field $K$ containing the $\ell$ roots of unity.
    There is a $K$-isomorphism $\phi: K(\sqrt[\ell]{\gamma_1}) \rightarrow K(\sqrt[\ell]{\alpha\gamma_2})$ for some $\alpha \in U/U^\ell$ if and only if $\lf(\gamma_1^m\O_K) = \lf(\gamma_2\O_K)$ and $\left[\frac{\gamma_1^m\O_K}{\lf(\gamma_1^m\O_K)}\right] = \left[\frac{\gamma_2\O_K}{\lf(\gamma_2\O_K)}\right]$ for some $1 \leq m \leq \ell-1$.
\end{prop}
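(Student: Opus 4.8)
The plan is to recall the Kummer-theoretic classification of $\Z/\ell\Z$-extensions: since $K$ contains the $\ell$-th roots of unity, every $\Z/\ell\Z$-extension of $K$ is of the form $K(\sqrt[\ell]\gamma)$ for some $\gamma \in K^\times$, and two such extensions $K(\sqrt[\ell]{\gamma_1})$ and $K(\sqrt[\ell]{\gamma_2})$ are $K$-isomorphic if and only if the images of $\gamma_1$ and $\gamma_2$ generate the same cyclic subgroup of $K^\times/(K^\times)^\ell$, i.e. $\gamma_1 \equiv \gamma_2^m \pmod{(K^\times)^\ell}$ for some $1 \le m \le \ell-1$. (Scaling $\gamma_2$ by an arbitrary element of $(K^\times)^\ell$ is harmless since it does not change the extension; the content of the "$\alpha \in U/U^\ell$" in the statement is that once we fix the \emph{ideal} $\gamma_2\O_K$ up to the equivalence below, the remaining freedom is exactly a unit modulo $\ell$-th powers.) So the proof reduces to translating the condition $\gamma_1 \equiv \gamma_2^m \pmod{(K^\times)^\ell}$, together with the freedom to rescale $\gamma_1$ by a unit, into a statement purely about the fractional ideals $\gamma_1\O_K$ and $\gamma_2\O_K$.

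First I would prove the forward direction. Suppose $\phi \colon K(\sqrt[\ell]{\gamma_1}) \to K(\sqrt[\ell]{\alpha\gamma_2})$ is a $K$-isomorphism for some unit $\alpha$. By Kummer theory applied to $K(\sqrt[\ell]{\gamma_1}) = K(\sqrt[\ell]{\alpha\gamma_2})$, there is an integer $1 \le m \le \ell-1$ and an element $k \in K^\times$ with $\gamma_1^m = \alpha\gamma_2 \, k^\ell$. Taking ideals, $\gamma_1^m\O_K = \gamma_2\O_K \cdot (k\O_K)^\ell$, since $\alpha$ is a unit. Now apply the $\ell$-power-free part operator $\lf$: because $\lf$ kills any $\ell$-th power of an ideal and is multiplicative in the appropriate sense, $\lf(\gamma_1^m\O_K) = \lf(\gamma_2\O_K)$. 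Dividing out the $\ell$-power-free part, the complementary factors $\frac{\gamma_1^m\O_K}{\lf(\gamma_1^m\O_K)}$ and $\frac{\gamma_2\O_K}{\lf(\gamma_2\O_K)}$ differ by $(k\O_K)^\ell$ — but actually we need more care: they differ by $(k\O_K)^\ell$ times possibly a further $\ell$-th power absorbed when extracting $\lf$. In any case, both are $\ell$-th powers of fractional ideals (since $\gamma_i^m\O_K / \lf(\gamma_i^m\O_K)$ is by definition an $\ell$-th power), and they differ by an $\ell$-th power, so their $\ell$-th roots differ by a principal fractional ideal, giving the equality of ideal classes $\left[\frac{\gamma_1^m\O_K}{\lf(\gamma_1^m\O_K)}\right] = \left[\frac{\gamma_2\O_K}{\lf(\gamma_2\O_K)}\right]$, as claimed. (Here I am implicitly using that if $\mf A^\ell = \mf B^\ell \cdot (k\O_K)^\ell$ then $\mf A = \mf B \cdot k\O_K$, valid because the $\ell$-th root of a fractional ideal is unique.)

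For the reverse direction, assume $\lf(\gamma_1^m\O_K) = \lf(\gamma_2\O_K) =: \mf P$ and the two complementary ideal classes agree. Write $\gamma_1^m\O_K = \mf I_1^\ell \mf P$ and $\gamma_2\O_K = \mf I_2^\ell \mf P$ where $\mf I_1, \mf I_2$ are fractional ideals; the hypothesis on ideal classes says exactly $[\mf I_1] = [\mf I_2]$. Here I would invoke Lemma \ref{Reduction General} (extended from integral to fractional ideals, or after clearing denominators by an $\ell$-th power, which does not change the extension): it gives $K(\sqrt[\ell]{\gamma_1^m}) = K(\sqrt[\ell]{\alpha\gamma_2})$ for some $\alpha \in U$. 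Finally, $K(\sqrt[\ell]{\gamma_1^m}) = K(\sqrt[\ell]{\gamma_1})$ because $\gcd(m,\ell) = 1$ (as $1 \le m \le \ell-1$ and $\ell$ is prime), so raising to the $m$-th power is a bijection on the cyclic group generated by $\sqrt[\ell]{\gamma_1}$ modulo $\ell$-th powers. Reducing $\alpha$ modulo $U^\ell$ gives the stated isomorphism.

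The main obstacle I anticipate is bookkeeping around the operator $\lf$ and the uniqueness of $\ell$-th roots of ideals: one must be careful that $\lf(\mf A \mf B^\ell) = \lf(\mf A)$ only up to reorganizing $\ell$-th power factors, and that "the complementary factor is an $\ell$-th power" is used consistently so that the passage from an equality of $\ell$-th-power ideals to an equality of ideal classes is legitimate. This is elementary but needs the unique factorization of fractional ideals stated carefully; none of it is deep, but it is where a sloppy argument would break. The Kummer-theory input and the appeal to Lemma \ref{Reduction General} are both routine.
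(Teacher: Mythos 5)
Your proposal is correct and follows essentially the same route as the paper: the forward direction via Kummer theory (pulling back to $\gamma_1^m = \alpha\gamma_2 k^\ell$ and comparing ideal factorizations), the reverse direction by citing Lemma \ref{Reduction General} applied to $\gamma_1^m$. The small extra care you take (clearing denominators by an $\ell$-th power since the lemma is stated for integral ideals, and noting $\gcd(m,\ell)=1$ so that $K(\sqrt[\ell]{\gamma_1^m}) = K(\sqrt[\ell]{\gamma_1})$) is left implicit in the paper but is the right thing to spell out.
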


\begin{proof}
    Lemma \ref{Reduction General} proves the ``if" direction. All that remains is to prove the ``only if direction". If there is such a $K$-isomorphism $\phi$, then $\phi(\gamma_2) = \gamma_2$ must be an $\ell$-power in $K(\sqrt[\ell]{\gamma_1})$. This means that $\gamma_2\O_K = k^\ell \gamma_1^m\O_K$ for some $k \in K$ and $1\leq m \leq \ell-1$. By unique factorization of ideals, $\gamma_1^m\O_K$ and $\gamma_2\O_K$ must have identical $\ell$-power-free parts. Additionally, dividing out by the $\ell$-power-free part, we get $k^\ell \frac{\gamma_1^m\O_K}{\lf(\gamma_1^m\O_K)} = \frac{\gamma_2\O_K}{\lf(\gamma_2\O_K)}$ so $\left[\frac{\gamma_1^m\O_K}{\lf(\gamma_1^m\O_K)}\right] = \left[\frac{\gamma_1\O_K}{\lf(\gamma_1\O_K)}\right]$.
\end{proof}

For ease of exposition, let $P$ be the set of non-zero principal ideals in $\O_K$ and $U$ be the set of units in $\O_K$. We define $F: P \times U \rightarrow \O_K\wo\{0\}$ to be a bijection such that $F(\mf A, u) = \gamma$ where $\gamma\O_K = \mf A$ and $F(\mf A, u_1) u_2 = F(\mf A, u_2) u_1$. Explicitly, for each principal ideal $\mf A \in P$, we fix a generator $\alpha_{\mf A}$. Then $F(\mf A, u) = \alpha_{\mf A}u$ and the inverse of $F$ is given by $\gamma \in \O_K\wo\{0\} \mapsto \left(\gamma\O_K, \frac{\alpha_{\gamma\O_K}}{\gamma}\right).$ For our counting purposes, we do not care which map we pick as long as we have a fixed map $F$ that does this. Later in this section, we will describe $F$ more specifically. \label{P}

\begin{cor}
\label{Choices}
Let $A = U/U^\ell \times \Cl(K) \times Q \times J$ where $Q$ is the set of $\ell$-power-free ideals with all prime factors dividing $\ell\O_K$ and $J$ is the set of $\ell$-power-free, prime-to-$\ell$ ideals. Let $A' = \{(u,\mc R,\mf Q,\mf I)\in A : \mc R^\ell [\mf Q\mf I] \text{ is principal}\}.$ Then there exists an $(\ell-1)$-to-1 map
$$\Phi: A' \rightarrow \{L/K : \Gal(L/K) \cong \Z/\ell\Z\}$$
 that sends $(u,\mc R, \mf Q, \mf I)$ to the extension $K(\sqrt[\ell]{\gamma})$ where $\gamma = F(u, \mf R^\ell \mf Q\mf I)$ and $\mf R$ is a fixed prime-to-$\ell$ integral representative of the ideal class $\mc R$. In particular,
$$\#\{L/K : \Gal(L/K) \cong \Z/\ell\Z,\ N(\mf F_{L/K}) \leq X^{\ell-1}\} = \sum_{u \in U/U^\ell} \sum_{\mc R \in \Cl(K)} \sum_{\mf Q \in Q} \sum_{\substack{\mf I \in J\\ \mf I \in (\mf R^\ell[\mf Q])^{-1}\\ N\left(\displaystyle\prod_{\mf p_i \mid \mf I} \mf p_i\right) \leq X}} \frac 1 {\ell-1}.$$
\end{cor}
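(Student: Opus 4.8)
The plan is to define $\Phi$ directly from the canonical factorization of a Kummer generator, prove surjectivity, and then establish that each fiber has size $\ell-1$ using Kummer theory together with Lemma~\ref{Reduction General}; the discriminant formula then follows from Theorem~\ref{Daberkow General}.

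\emph{Construction and surjectivity.} Since $\mu_\ell\subseteq K$, every $L/K$ with $\Gal(L/K)\cong\Z/\ell\Z$ is $K(\sqrt[\ell]{\gamma})$ for some $\gamma\in K^\times\wo(K^\times)^\ell$, and after scaling by an $\ell$-th power we may take $\gamma\in\O_K\wo\{0\}$. Factor $\gamma\O_K=\mf A^\ell\cdot\lf(\gamma\O_K)$ into its $\ell$-power part and $\ell$-power-free part, split $\lf(\gamma\O_K)=Q\mf I$ into its $\ell$-part $Q\in Z$ and its $\ell$-free part $\mf I\in J$, and put $\mc R=[\mf A]$. Because $\mf R$ is coprime to $\ell\O_K$ and $\mf I$ is $\ell$-power-free, no prime acquires exponent $\geq\ell$ in passing from $Q\mf I$ to $\mf R^\ell Q\mf I$, so $\mf R^\ell Q\mf I$ has $\ell$-power part exactly $\mf R^\ell$, $\ell$-part $Q$, and $\ell$-free part $\mf I$; moreover $[\mf R^\ell Q\mf I]=[\mf A^\ell Q\mf I]=[\gamma\O_K]$ is trivial, so $\mf R^\ell Q\mf I\in P$ and $(u,\mc R,Q,\mf I)\in A'$ for any $u$. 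By Lemma~\ref{Reduction General} (with $\mf P=Q\mf I$) there is $\alpha\in U$ and $\gamma''$ with $\gamma''\O_K=\mf R^\ell Q\mf I$, $K(\sqrt[\ell]{\gamma})=K(\sqrt[\ell]{\alpha\gamma''})$, and $\alpha\gamma''\equiv\gamma\pmod{(K^\times)^\ell}$; writing $\alpha\gamma''=F(\mf R^\ell Q\mf I,u)$ and reducing $u$ modulo $U^\ell$ (which does not change the field) shows $\Phi(u,\mc R,Q,\mf I):=K(\sqrt[\ell]{F(\mf R^\ell Q\mf I,u)})$ equals $L$, so $\Phi$ is onto. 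Feeding $\gamma=F(\mf R^\ell Q\mf I,u)$ itself back through this recipe returns $(u,\mc R,Q,\mf I)$ unchanged, since $\mf R$ is the fixed representative of $\mc R$. One caveat: $F(\mf R^\ell Q\mf I,u)$ lies in $(K^\times)^\ell$—so its $\Phi$-image is $K$, not a $\Z/\ell\Z$ extension—only when $\mf R^\ell Q\mf I$ is an $\ell$-th power ideal, which forces $Q=\mf I=\O_K$, and then only when $\mc R$ and $u$ are trivial; we discard this one tuple $(\bar 1,[\O_K],\O_K,\O_K)$, which affects neither the fiber count nor the asymptotics.

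\emph{The fibers have size $\ell-1$.} Fix $L=K(\sqrt[\ell]{\gamma})$ in the image. By Kummer theory the $\delta\in K^\times$ with $K(\sqrt[\ell]{\delta})=L$ are exactly those with $\delta\equiv\gamma^m\pmod{(K^\times)^\ell}$, $1\leq m\leq\ell-1$, and these $\ell-1$ classes are distinct. Feeding each $\gamma^m$ through the recipe above produces a tuple $(u^{(m)},\mc R^{(m)},Q^{(m)},\mf I^{(m)})\in A'$ with $\Phi$-image $L$. The essential subtlety is that $\lf(\gamma^m\O_K)\neq\lf(\gamma\O_K)^m$ in general: writing $\lf(\gamma\O_K)=\prod_i\mf p_i^{e_i}$ with $1\leq e_i\leq\ell-1$, one has $Q^{(m)}\mf I^{(m)}=\prod_i\mf p_i^{(me_i\bmod\ell)}$ and $\mc R^{(m)}=\mc R^m\cdot\big[\prod_i\mf p_i^{\lfloor me_i/\ell\rfloor}\big]$, so the class coordinate genuinely shifts with $m$. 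These $\ell-1$ tuples are pairwise distinct: if $Q\mf I\neq\O_K$ then some $e_i\neq0$ and $m\mapsto me_i\bmod\ell$ is injective on $\{1,\dots,\ell-1\}$, so the $(Q,\mf I)$-coordinates alone recover $m$; if $Q=\mf I=\O_K$ then (the excluded tuple aside) $(u,\mc R)$ is nontrivial, the $m$-th tuple is $(u^m,\mc R^m,\O_K,\O_K)$, and since $U/U^\ell$ is an $\F_\ell$-vector space and $\langle\mc R\rangle$ has order $1$ or $\ell$, no nontrivial element is fixed by raising to the power $m$. Conversely, any preimage $(u',\mc R',Q',\mf I')$ of $L$ satisfies $F((\mf R')^\ell Q'\mf I',u')\equiv\gamma^m\pmod{(K^\times)^\ell}$ for a unique $m$; comparing $\ell$-power-free parts forces $(Q',\mf I')=(Q^{(m)},\mf I^{(m)})$, comparing ideal classes of $\ell$-power parts forces $\mc R'=\mc R^{(m)}$ (hence $\mf R'=\mf R^{(m)}$), and then $F((\mf R')^\ell Q'\mf I',u')$ and $F((\mf R^{(m)})^\ell Q^{(m)}\mf I^{(m)},u^{(m)})$ generate the same ideal while being congruent modulo $(K^\times)^\ell$, so they differ by an element of $U\cap(K^\times)^\ell=U^\ell$ (the equality holding because $\beta^\ell\in U$ forces $(\beta\O_K)^\ell=\O_K$, hence $\beta\in U$), giving $u'=u^{(m)}$ in $U/U^\ell$. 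Thus the fiber over $L$ is exactly $\{(u^{(m)},\mc R^{(m)},Q^{(m)},\mf I^{(m)}):1\leq m\leq\ell-1\}$.

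\emph{The counting formula.} By Theorem~\ref{Daberkow General}, for $\mf p\nmid\ell\O_K$ we have $\nu_{\mf p}(\Delta_{L/K})=\ell-1$ when $\nu_{\mf p}(\gamma)\not\equiv0\pmod\ell$, i.e.\ when $\mf p\mid\mf I$, and $0$ otherwise, so $\mf F_{L/K}=\prod_{\mf p_i\mid\mf I}\mf p_i^{\ell-1}$ and $N(\mf F_{L/K})$ depends only on $\mf I$. Partitioning $\{L/K:\Gal(L/K)\cong\Z/\ell\Z,\ N(\mf F_{L/K})\leq X\}$ into its $\Phi$-fibers of size $\ell-1$, rewriting the defining condition of $A'$ as $[\mf I]=(\mf R^\ell[Q])^{-1}$ and the discriminant bound as $N\big(\prod_{\mf p_i\mid\mf I}\mf p_i^{\ell-1}\big)\leq X$, yields the displayed sum, each field being counted $(\ell-1)\cdot\frac1{\ell-1}=1$ time. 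I expect the main obstacle to be the fiber analysis: correctly tracking how the ideal-class coordinate $\mc R$ transforms under $\gamma\mapsto\gamma^m$ (powers of the $\ell$-power-free part contributing fresh $\ell$-th-power factors) and excluding collisions among the $\ell-1$ fiber elements in the degenerate stratum $Q=\mf I=\O_K$; the remainder is a formal assembly of Kummer theory, Lemma~\ref{Reduction General}, and Theorem~\ref{Daberkow General}.
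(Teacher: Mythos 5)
Your proof is correct and follows the same overall strategy as the paper's (parametrize via the canonical factorization of $\gamma\O_K$, with surjectivity via Lemma~\ref{Reduction General} and the fiber count via Kummer theory), but it is considerably more rigorous than the paper's one-paragraph argument, which has real gaps that you fill. The paper asserts ``$\Phi(u,[\mf R],Q,\mf I)=L$'' without explicitly reconciling the $\ell$-th root of the actual $\ell$-power part of $\gamma\O_K$ with the \emph{fixed} class representative used in the definition of $\Phi$; you handle this by invoking Lemma~\ref{Reduction General} directly. More significantly, the paper only proves one implication for the fiber count---that $F$-values related by an $m$-th power yield the same extension---and does not verify that the resulting $\ell-1$ tuples are pairwise distinct, nor that every preimage arises this way. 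You supply both directions, and you correctly flag the genuine subtlety that $\lf(\gamma^m\O_K)\neq\lf(\gamma\O_K)^m$, so that the ideal-class coordinate shifts under $\gamma\mapsto\gamma^m$ in a way that must be tracked; your separate treatment of the degenerate stratum $Q=\mf I=\O_K$ and your exclusion of the tuple producing $\gamma\in(K^\times)^\ell$ are further clarifications the paper omits. One minor technical point: in that degenerate stratum the unit coordinate is actually $u^{(m)}=\alpha_{\O_K}^{m-1}u^m$ rather than literally $u^m$ (the fixed generator of $\O_K$ contributes), but your distinctness conclusion survives since $u^{(m_1)}/u^{(m_2)}=\gamma^{m_1-m_2}$ in $U/U^\ell$, which is nontrivial precisely because $L\neq K$.
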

\begin{proof}
First, we prove that $\Phi$ is surjective. By Kummer theory, every cyclic $\ell$-extension of $L/K$ is given by $K(\sqrt[\ell]\gamma)$ for some $\gamma \in \O_K$. With respect to the fixed set of generators of $P$ chosen by our choice of $F$, we can write $\gamma = \alpha_{\gamma_{\O_K}} u$. By unique factorization of ideals, $\gamma\O_K = \mf R^{\ell} \mf Q\mf I$ where $\mf R^{\ell}$ is the largest $\ell$-power dividing $\gamma$, $\mf Q$ is the $\ell$-factor of $\gamma$, and $\mf I$ is the prime-to-$\ell$, $\ell$-power-free part of $\gamma$. Then $\Phi(i,[\mf R],\mf Q, \mf I) = L$. The map $\Phi$ is $(\ell-1)$-to-1 because if $F(u_1,\mf R_1^\ell Q_1\mf I_1) = F(u_2,\mf R_2^\ell Q_2\mf I_2)^m$ for $1\leq m \leq \ell-1$ then $\Phi(u_1,\mc R_1,Q_1,\mf I_1) = \Phi(u_2,\mc R_2,Q_2,\mf I_2).$ Finally, last equality in the corollary follows from the definition of the map $\Phi$ and the computation of $\mf F_{L/K}$ in Theorem \ref{Daberkow General}.
\end{proof}

\begin{rmk}
    This corollary shows that an $\ell$-Kummer extension is determined by four parameters:
    \begin{enumerate}
        \item A unit $u \in U/U^\ell$
        \item An ideal class $\mc R\in \Cl(K)$
        \item A $\mf Q \in Q$
        \item An $\ell$-power-free, prime-to-$\ell$ ideal $\mf I \in \mc R^{-\ell}[\mf Q]^{-1}$
    \end{enumerate}
    and that the correspondence between these 4-tuples and $\ell$-Kummer extensions is $\ell-1$ to 1.
\end{rmk}

\subsection{Distribution of $\ell$-factors of the discriminant}
\label{LDist}
Theorem \ref{Daberkow General} completely determines all possible $\ell$-factors of the discriminant of an $\ell$-Kummer extension of a number field $K$. Because we plan to sum over each possible $\ell$-factor of the discriminant as in \eqref{SumLPart}, we must first understand density, $\rho_{\mf B}$, of extensions with a given $\ell$-factor of the discriminant, $\mf B$, as in \eqref{rho_q}.

We will use $\rho_{\mf B}$ as an adjustment factor in our enumeration. For a fixed $\mf B$, we will count all $\ell$-Kummer extensions with $N(\mf F_{L/K})\leq \frac{X^{\ell-1}}{N(\mf B)}$ and then multiply this number by $\rho_{\mf B}$ to count the number of $\ell$-Kummer extensions with $N(\Delta_{L/K}) \leq X^{\ell-1}$ and $\mf L_{L/K} = \mf B$. First, we must show that $\rho_{\mf B}$ is a constant. We must also show that it is independent of Steinitz class as described by Equation \eqref{rho_q indep}. 

We saw in Theorem \ref{Daberkow General} that for a prime divisor $\mf q$ of $\ell\O_K$ and an $\ell$-Kummer extension $L = K(\sqrt[\ell]{\gamma})$, $\nu_{\mf q}(\Delta_{L/K})$ is determined by whether $\mf q$ divides $\gamma\O_K$ and the equivalence class of $\gamma$ modulo powers of $\mf q$ if $\mf q$ does not divide $\gamma\O_K$. In other words, for fixed $\mf B$, let
\begin{align*}
W &= \{\mf q | \ell \O_K : \mf q \text{ prime, } \nu_{\mf q}(\mf B) = \ell-1 + \ell\nu_{\mf q}(\ell)\},\\  W' &= \{\mf q | \ell \O_K : \mf q \text{ prime, } \nu_{\mf q}(\mf B) \neq \ell-1 + \ell\nu_{\mf q}(\ell)\}.
\end{align*}\label{W} 
Furthermore, let $$\mf W = \prod_{\mf q \in W'} \mf q^{\ell\nu_{\mf q}(\ell)},$$
 $G = (\O_K/\mf W)^\times$ and $H = G/G^{\ell}$. Also, denote by $\O_K(\mf W)$ the set of elements of $\O_K$ that are relatively prime to $\mf W$ and $\psi: \O_K(\mf W) \rightarrow H$ be the composition of the quotient maps. \label{GH} This notation allows us to state the main results of the section.
 
\begin{prop}
\label{l-Density}
    Let $K$ be a number field with non-trivial class group containing the $\ell$-th roots of unity. Additionally, let $\mf B \in B$, and let $K$, $\rho_{\mf B}$, $H$, $W$, $W'$ and $\mf W$ be as defined above. Let $C$ be the set of congruence classes in $H$ such that for all $\mf q \in W'$, $\gamma \in H$ is an $\ell$-power mod $\mf q^{\ell\nu_{\mf q}(\ell)}$ if $\nu_{\mf q}(\mf B) = 0$ and otherwise $\gamma \in H$ is an $\ell$-power mod $\mf q^s$ and not mod $\mf q^{s+1}$ where $s = -\frac{\nu_{\mf q}(\mf B)}{\ell-1}+\ell\nu_{\mf q}(\ell) + 1$. Finally, let $\mc C \in R_K.$ Then 
   $$\rho_{\mf B}^{\mc C} = \left(\frac{\ell-1}{\ell}\right)^{\#W} \left(\frac 1 \ell\right)^{\#W'}\frac{\#C}{\#H}.$$
\end{prop}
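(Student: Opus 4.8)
The plan is to count $\ell$-Kummer extensions with $\mf L_{L/K} = \mf Q$ by working prime-by-prime over the divisors of $\ell\O_K$, using the parametrization from Corollary \ref{Choices} together with Daberkow's formula (Theorem \ref{Daberkow General}). Fix a large $X$. By Corollary \ref{Choices} every $\ell$-Kummer extension with $N(\mf F_{L/K}) \le X$ is hit $(\ell-1)$ times by a tuple $(u, \mc R, Q, \mf I) \in A'$, and the $\ell$-free part $\mf F_{L/K}$ depends only on $\mf I$ (it is $\prod_{\mf p \mid \mf I}\mf p^{\ell-1}$), while the $\ell$-part $\mf L_{L/K}$ depends only on the behavior of $\gamma = F(u, \mf R^\ell Q\mf I)$ at the primes $\mf q \mid \ell\O_K$. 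So the ratio defining $\rho_{\mf Q}$ in \eqref{rho_q} should factor: the denominator counts all tuples with $N(\mf F_{L/K})\le X$, and the numerator counts those additionally satisfying the local conditions at primes dividing $\ell$ that force $\mf L_{L/K} = \mf Q$. First I would translate, via Daberkow, the condition $\mf L_{L/K} = \mf Q$ into: (i) for each $\mf q \in W$, $\nu_{\mf q}(\gamma) \not\equiv 0 \pmod \ell$; and (ii) for each $\mf q \in W'$, $\nu_{\mf q}(\gamma) \equiv 0 \pmod\ell$ and the ``$s$-value'' (the largest $m$ with $\gamma$ an $\ell$-th power mod $\mf q^m$) equals the prescribed value $s = -\nu_{\mf q}(\mf Q)/(\ell-1) + \ell\nu_{\mf q}(\ell) + 1$ when $\nu_{\mf q}(\mf Q)\neq 0$, or $\gamma$ is an $\ell$-th power mod $\mf q^{\ell\nu_{\mf q}(\ell)}$ when $\nu_{\mf q}(\mf Q)=0$. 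Condition (ii) is exactly membership of $\psi(\gamma)$ in the set $C\subseteq H$; condition (i) is a condition on $Q$ (the $\ell$-part of $\gamma\O_K$) at the primes in $W$.

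Next I would compute the two densities. For condition (i): the primes of $W$ are a fixed finite set, and whether $\mf q \mid \ell\O_K$ divides $Q$ with the right exponent is, after the dust settles, a condition that among the relevant choices is satisfied a proportion $\frac{\ell-1}{\ell}$ of the time per prime $\mf q \in W$ — the valuation $\nu_{\mf q}(\gamma) \bmod \ell$ is equidistributed among the $\ell$ residues in the relevant family (one residue, $0$, is excluded), giving the factor $\left(\frac{\ell-1}{\ell}\right)^{\#W}$. For condition (ii): this is a congruence condition cutting $H = G/G^\ell$ down to the subset $C$; by a standard argument (Chebotarev / counting elements of $\O_K$ in a fixed residue class modulo the fixed modulus $\mf W$, which is coprime to the ramification data governed by $\mf I$), the proportion of extensions whose associated $\gamma$ has $\psi(\gamma) \in C$ is $\frac{\#C}{\#H}$. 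The key point making these two factors multiply cleanly is that the conditions at primes in $W$, the condition $\psi(\gamma)\in C$, and the size condition $N(\mf F_{L/K}) = N\big(\prod_{\mf p\mid\mf I}\mf p^{\ell-1}\big) \le X$ involve disjoint sets of primes, so in the limit $X\to\infty$ they are "independent." I would make this precise by fixing the data at primes dividing $\ell$ first, then summing the resulting count of valid $\mf I$'s (an ideal-counting problem in a fixed ideal class with a fixed norm bound, whose asymptotics are governed by the analytic class number formula), and checking the proportion passes to the limit.

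Finally I would assemble: the numerator of \eqref{rho_q} equals $\left(\frac{\ell-1}{\ell}\right)^{\#W}\cdot\frac{\#C}{\#H}$ times the denominator, up to $o(X)$ error, which yields $\rho_{\mf Q} = \left(\frac{\ell-1}{\ell}\right)^{\#W}\frac{\#C}{\#H}$. The existence of the limit follows from the existence of the limiting count of $\ell$-free ideals of bounded norm in a fixed ideal class, so the ratio stabilizes.

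The main obstacle I anticipate is making the "independence" rigorous: one must show that imposing the local conditions at the (finitely many) primes above $\ell$ does not distort the asymptotic count of the remaining parameter $\mf I$, and in particular that the constraint $\mf I \in (\mc R^\ell[Q])^{-1}$ interacts correctly with the norm bound. This requires an equidistribution-of-ideals-in-ideal-classes input (effectively the analytic class number formula with congruence conditions), and care that the error terms are uniform enough to survive the finite sum over the fixed local data. A secondary subtlety is verifying that the "$\ell-1$ of $\ell$ residues" heuristic for condition (i) at primes in $W$ is literally correct given the constraint that $\mf R^\ell Q\mf I$ must be principal — one has to confirm that this principality constraint, being a constraint in the class group, does not bias the valuation $\nu_{\mf q}(\gamma) \bmod \ell$ at an individual prime $\mf q$.
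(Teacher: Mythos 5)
Your proposal matches the paper's proof in both structure and content: split the constraint $\mf L_{L/K} = \mf Q$ into a valuation condition at the primes in $W$ (density $\bigl(\frac{\ell-1}{\ell}\bigr)^{\#W}$, handled via Corollary \ref{Equidistribution of Squarefree Ideals}) and a congruence condition at the primes in $W'$ (density $\frac{\#C}{\#H}$, handled via Proposition \ref{Lattices}), then pass these through the finite sums over local data exactly as you describe. The one small imprecision is your attribution of the congruence-step equidistribution to ``Chebotarev / counting elements of $\O_K$ in a fixed residue class'': the paper's Proposition \ref{Lattices} is a geometry-of-numbers count built on a fundamental domain for the lattice $\wedge_{V^\ell}$ rather than $\wedge_V$, precisely because one must count $\gamma$ up to $\ell$-th powers of units (cosets giving distinct $\ell$-Kummer extensions) rather than individual elements of $\O_K$; this is why the target group is $H = G/G^\ell$ and not $G$. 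The ``independence'' obstacle you flag is resolved exactly as you anticipate, by fixing the finitely many choices of $\ell$-part $Q'$ and representative $\mf R$ before applying the ideal-counting asymptotics and summing at the end.
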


\begin{rmk}
We assume that $K$ has non-trivial class group because the main theorem is only interesting in this case and it allows us to exclude from consideration the field $K=\Q(\sqrt{-3})$ when $\ell=3$. Therefore, for $\ell\geq 3$, we can assume that $[K:\Q] \geq 3$.
\end{rmk}

\begin{cor}\label{SteinitzIndependence}
Let all be as in Proposition \ref{l-Density}. For any $\mc C \in R_K$,
$$\lim_{X\rightarrow \infty} \frac{\#\{L/K: \Gal(L/K) \cong \Z/\ell\Z,\ N(\mf F_{L/K}) \leq X^{\ell-1},\ \mf L_{L/K} = \mf B\}}{\#\{L/K: \Gal(L/K) \cong \Z/\ell\Z,\  N(\mf F_{L/K}) \leq X^{\ell-1}\}} = \rho_{\mf B}^{\mc C}.$$
\end{cor}

By Theorem \ref{Daberkow General}, in order for $\mf L_{L/K} = \mf B$, it must be the case that 
\begin{enumerate}
    \item For all $\mf q \in W$, $\mf q \mid \gamma\O_K$ (divisibility condition),
    \item $\gamma \in C$ (congruence condition).
\end{enumerate}  
Note that congruence classes mod $\mf W$ can be reduced to congruence classes mod $\mf q^{m}$ for all $0 < m \leq \ell\nu_{\mf q}(\ell)$ and all $\mf q \in W'$ by quotienting. The proofs of Proposition \ref{l-Density} and its corollary will be given in Section \ref{3Proofs}. To prove these results, we must understand both the divisibility condition and the congruence condition.

\subsection{Congruence Condition}
We focus on the congruence condition first. In particular, we will show that for any ideal $\mf W$, the set of $\gamma$ relatively prime to $\mf W$ corresponding to $\ell$-Kummer extensions are equidistributed mod $H$.

This equidistribution statement is reminiscent of the standard result on the equidistribution of ideals among ideal classes, often proved using geometry of numbers \cite[\S 6]{M} by counting principal ideals contained in a given ideal. We will adopt that argument with a couple of small changes to count $\gamma \in \O_K$ in a given congruence class modulo $\mf W$. 

Throughout this subsection, let $K$ be a number field of degree $n$ and let $\mf W$ and $\mf I$ be relatively prime ideals in $\O_K$. We will take $\mf W$ to be a product of powers of primes dividing $\ell\O_K$, and $\mf I$ to be the $\ell$-factor and $\ell$-power part of $\gamma\O_K$. Also, let $T = \O_K/\{0\}$ be a multiplicative semi-group with sub-semi-group $V^\ell$, where $V$ is the free part of $U$. Let $Y$ be a set of representatives for cosets of $T/V^\ell$. This is a more formal way of describing the image of $F$ restricted to $P \times V/V^\ell$. \label{V}

We want to count cosets up to $\ell$-powers of units because each coset will correspond to a different $\ell$-Kummer extension. Our goal is to embed $T$ into $(\R^\times)^{r_1} \times (\C^\times)^{r_2}$ and then find a fundamental domain $D$ in which each coset has one representative. Once we have a fundamental domain, we will argue that $Y$ is equally distributed among congruence classes mod $\mf W$ up to $\ell$-powers.

\label{M} We can find a fundamental domain $D$ for $Y$ that is almost the same as the fundamental domain in \cite[p. 115]{M}. Let $M: \O_K\wo \{0\} \rightarrow (\R^\times)^{r_1} \times (\C^\times)^{r_2}$ be the Minkowski embedding $$M(\alpha) = (\sigma_1(\alpha),\ldots, \sigma_{r_1}(\alpha), \tau_1(\alpha), \ldots \tau_{r_2}(\alpha)),$$
let
$\log: (\R^\times)^{r_1} \times (\C^\times)^{r_2} \rightarrow \R^{r_1+r_2}$ be the standard isomorphism
$$\log(x_1,\ldots, x_{r_1}, z_1, \ldots, z_{r_2}) = (\log|x_1|,\ldots, \log|x_{r_1}|, \log|z_1|,\ldots \log |z_{r_2}|),$$
and let $\mu:\O_K\wo \{0\} \rightarrow \R^{r_1+r_2}$ be the composition of the two maps. Also define the norm 
$$N(x_1,\ldots, x_{r_1},z_1,\ldots, z_{r_2}) = x_1\cdots x_{r_1}|z_1|^2\cdots |z_{r_2}|^2$$
on $(\R^\times)^{r_1} \times (\C^\times)^{r_2}$.
We note that $\mu(U)$ is a lattice $\wedge_U$ contained in a hyperplane $H$ of $\R^{r_1+r_2}$. The kernel of $\mu|_U$ is $U_{\tors}$, which is to say that $\mu(V) \cong \wedge_U$. Restricting further,
$\mu(V^\ell)$ is a sub-lattice $\wedge_{V^\ell} \seq \wedge_V = \wedge_U$. 

Let $G$ be the fundamental parallelotope for the lattice $\wedge_{V^\ell}$. Just as in \cite[p. 115]{M}, let $v = (1,\ldots,1,2,\ldots, 2)$ and 
$$D = \{x \in (\R^\times)^{r_1} \times (\C^\times)^{r_2} : \log x \in G + \R v\}.$$
Then \label{D} $D$ is a fundamental domain for $M(B)$. Letting $D_X= \{x\in D : N(x) \leq X\}$, we note that our $D_X$ is $(n-1)$-Lipschitz parametrizable by the same argument as in \cite[p. 118]{M}. The only difference between the fundamental domain of $\wedge_V$ in \cite[p. 115]{M} and our fundamental domain $D$ of $\wedge_{V^\ell}$ is that the inverse image of our $D$ under the log map is $\ell$ times larger in each dimension. Therefore, by the same argument as \cite[p. 118]{M}, we have a $(n-1)$-Lipschitz parametrization of $D_X$ with Lipschitz bound $\lambda = \max(\lambda, \lambda^\ell)$ where $\lambda$ is the constant from \cite[p. 119]{M}.

Now that we understand the fundamental domain, we can argue the equidistribution results that we need.

\begin{prop}
\label{Lattice Dist}
    Let all be as above and $a \in H$. Then    
    $$\lim_{X\rightarrow \infty} \frac{\#\{r \in D_X \cap M(\mf I\cap \O_K(\mf W)) :  \psi(M^{-1}(r)) = a\}}{\#D_X \cap M(\mf I\cap \O_K(\mf W))} = \frac 1 {\#H}.$$
\end{prop}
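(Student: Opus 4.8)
### Proof Proposal

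The plan is to mimic the classical lattice-point count behind equidistribution of ideals in ideal classes (as in Marcus \S 6), but refined to track congruence classes modulo $\mf W$. The key idea is that $D_X \cap M(\mf I \cap \O_K(\mf W))$ is, up to a negligible boundary error, the set of lattice points of $M(\mf I)$ inside the nicely-shaped region $D_X$, with the extra coprimality-to-$\mf W$ condition handled by inclusion-exclusion (or equivalently by restricting to a union of congruence classes). Since we want to count points of $M(\mf I)$ in a fixed residue class $a \in H = G/G^\ell$ — which is a union of residue classes modulo $\mf W$ — the count reduces to counting points of a \emph{finer} lattice (a sublattice of $M(\mf I)$ cut out by a congruence mod $\mf W$, translated by a coset representative) inside $D_X$.

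First I would observe that fixing $\psi(M^{-1}(r)) = a$ for $a \in H$ amounts to requiring $M^{-1}(r)$ to lie in a specified union of congruence classes mod $\mf W$: the preimage under $G \to G/G^\ell$ of $a$ consists of $\#G^\ell$ residue classes in $(\O_K/\mf W)^\times$, and each such class, pulled back to $\mf I$, is either empty or a coset of the sublattice $\mf I \cap (1 + \text{stuff})$... more precisely, a translate of the sublattice $M(\mf I \cap \mf W \mf I')$-type object. The clean way: for $b \in (\O_K/\mf W)^\times$, the set $\{x \in \mf I : x \equiv \beta_b \pmod{\mf W}\}$ (for a fixed lift $\beta_b$, which exists since $\mf I$ and $\mf W$ are coprime, by CRT) is a coset of the index-$[\O_K : \mf W] = N(\mf W)$ sublattice $\mf I \mf W$ inside $\mf I$. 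So $D_X \cap M(\mf I \cap \O_K(\mf W))$ with the constraint $\psi = a$ is a disjoint union over those $\#G^\ell$ residues $b$ of $(D_X \cap (\text{translate of } M(\mf I \mf W)))$.

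Next I would invoke the Lipschitz-parametrizability of $D_X$ established just above: $D_X = X^{1/n} D_1$ is $(n-1)$-Lipschitz parametrizable with a bound $\lambda'$ independent of $X$, so for any full-rank lattice $\Lambda$ of covolume $\Delta$ with a translate $v + \Lambda$, the standard estimate gives
$$\#\big((v+\Lambda) \cap D_X\big) = \frac{\vol(D_1)}{\Delta}\, X + O\!\left(X^{(n-1)/n}\right),$$
with the implied constant depending only on $\lambda'$, $n$, and the successive minima of $\Lambda$ — but crucially \emph{not} on the translation vector $v$. Applying this to $\Lambda = M(\mf I \mf W)$ (covolume $N(\mf I \mf W)\sqrt{|d_K|}$ up to the usual $2^{-r_2}$ factor) for each of the $\#G^\ell$ relevant translates, and separately to $\Lambda = M(\mf I)$ for the full count $\#D_X \cap M(\mf I \cap \O_K(\mf W))$ — the latter being $\#\{b \in (\O_K/\mf W)^\times\}$-many translates of $M(\mf I \mf W)$, i.e. $\#G$ of them — the main terms are $\#G^\ell \cdot \frac{\vol(D_1)}{N(\mf I\mf W)\sqrt{|d_K|}} X$ in the numerator and $\#G \cdot \frac{\vol(D_1)}{N(\mf I \mf W)\sqrt{|d_K|}} X$ in the denominator. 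Taking $X \to \infty$, the error terms vanish and the ratio is $\#G^\ell / \#G = 1/[G:G^\ell] = 1/\#H$, as claimed.

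The main obstacle I expect is bookkeeping rather than conceptual: one must be careful that the region $D$ is a genuine fundamental domain for the action of $V^\ell$ (units up to $\ell$-th powers and torsion), so that each element of $B$ — equivalently each $V^\ell$-orbit of elements of $\O_K \setminus \{0\}$ — has exactly one representative in $D$; and one must make sure the constraint "$r \in M(\mf I)$" genuinely corresponds to "the ideal generated is $\mf I \cdot (\text{unit stuff})$" in a way compatible with the coset representatives chosen by $F$. The other delicate point is confirming the Lipschitz/error-term constant is uniform over the finitely many translates $v$ — this is standard (the boundary $\partial D_X$ has the same parametrization regardless of which translate of the lattice we count), but should be stated carefully by citing the counting lemma in the form where the error depends only on the Lipschitz data of the region and the covolume of the lattice. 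Once these are pinned down, the computation of the ratio is immediate.
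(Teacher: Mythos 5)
Your proposal follows essentially the same approach as the paper: decompose the condition $\psi(M^{-1}(r)) = a$ into a disjoint union of $\#G^\ell$ translates of the sublattice $M(\mf I \mf W)$ (using CRT and $\gcd(\mf I, \mf W) = 1$), invoke the Lipschitz parametrizability of $D_X$ to get a lattice-point count whose error term is uniform over translates, and take the ratio $\#G^\ell / \#G = 1/\#H$. The paper packages the same computation as a relative comparison (its Lemma 3.7, asserting $\#D_X(a) = \#D_X(b) + O(X^{1-1/n})$ for any two classes) rather than writing out the main-term covolumes explicitly as you do, but the underlying decomposition and counting argument are identical.
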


Before we prove the proposition, we note that the numerator is independent of choice of coset representatives. This is because different coset representatives of $S/V^\ell$ will have a ratio that is an $\ell$-power in $\O_K$ and thus a ratio that is an $\ell$-power modulo $\mf W$.

\begin{proof}   
This result mostly follows from results in \cite[\S 6]{M} since our fundamental domain is still $(n-1)$-Lipschitz parametrizable. The other difference is that we are looking at translations of the lattice $D_X \cap \wedge_{\mf I\mf W}$ by elements of $G$ instead of the original lattice. The proposition follows immediately from the following lemma.
\end{proof}

\begin{lem}
    \label{Translation}
    Let all be as as above and $X \in \R^+$. For notational simplicity, for $c \in \O_K$ relatively prime to $\mf W$, let
    \begin{align*}
        D(\mf I, c) &= \{r \in D\cap M(\mf I \cap \O_K(\mf W)) : \psi(M^{-1}(r)) = c\},\\ D_X(\mf I, c) &= D_X \cap D(c). 
    \end{align*}Then for any $a,b \in \O_K$ relatively prime to $\mf W$, we have
   $$\#D_X(\mf I, a) = \#D_X(\mf I, b) + O(X^{1-\frac 1 n}).$$
\end{lem}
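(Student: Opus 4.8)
The plan is to show that the two counting functions $\#D_X(a)$ and $\#D_X(b)$ differ by a boundary term. The key observation is that $D(a)$ and $D(b)$ are both translates (inside the ambient lattice $\wedge_{\mf I \mf W} = M(\mf I \cap \mf W)$, or more precisely inside $M(\mf I)$ restricted to residues prime to $\mf W$) of one another by a fixed lattice vector: choose $t \in \O_K$ with $t \equiv a b^{-1} \pmod{\mf W}$ and $t$ a unit mod $\mf W$; then multiplication by $t$ on $\O_K$ induces, via $M$, a linear map on $\R^{r_1} \times \C^{r_2}$ that carries the set of points of $M(\mf I \cap \O_K(\mf W))$ congruent to $b$ bijectively onto those congruent to $a$. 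However, this linear map does \emph{not} preserve the fundamental domain $D$ or the norm ball $D_X$, so the counts inside $D_X$ are not literally equal — that is exactly the discrepancy we must control.

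First I would set up the bijection carefully. Let $\Lambda = M(\mf I \mf W)$ be the image of the ideal $\mf I \mf W$ (a full-rank lattice in $\R^{n}$ under the Minkowski embedding, identifying $\C^{r_2}$ with $\R^{2r_2}$), and fix residues: $M(\mf I \cap \O_K(\mf W))$ decomposes into $\#H$-many cosets of $\Lambda$ (one for each class in $H$) together with the principality/coprimality bookkeeping. The set $D(a) \cup (D(a) + \text{one translate of the } G\text{-parallelotope})$ tiles the coset $\Lambda + a$, and similarly for $b$; translation by $M(t)$ (a linear automorphism $\phi_t$ of $\R^n$ of determinant $\pm N(t)$) sends the coset $\Lambda + b$ to $\Lambda + a$. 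So there is a bijection between the lattice points of $D(b)$-type structure and $D(a)$-type structure once we forget the truncation $N(x) \le X$.

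Next, I would compare $\#D_X(a)$ with $\#\phi_t(D_X(b))$. These differ only because $\phi_t(D_X)$ is not $D_X$: the symmetric difference $D_X \,\triangle\, \phi_t(D_X)$ is contained in the region between two norm-$X$ ``cones'' that differ by the bounded linear distortion $\phi_t$, and by the $(n-1)$-Lipschitz parametrizability of $D_X$ established in the text (with uniform Lipschitz constant coming from $\lambda' = \max(\lambda, \lambda^\ell)$), the number of lattice points of $\Lambda$ in this symmetric difference is $O(X^{n-1})$. This is the standard geometry-of-numbers estimate: a set whose boundary is $(n-1)$-Lipschitz parametrizable with scaling has $\vol + O(X^{(n-1)/n} \cdot X^{1/n}) = \vol + O(X^{n-1})$ lattice points when scaled to size $X^{1/n}$ in each coordinate, and here the two truncated domains have \emph{equal volume} (since $\phi_t$ scales volume by $|N(t)|$ but also scales the norm function $N$ by $|N(t)|$, so $\phi_t(D_X) = D_{|N(t)| X}$ rescaled — one must be slightly careful here and instead directly bound the symmetric difference by the $O(X^{n-1})$ boundary count). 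Summing the count over the finitely many cosets and residue conditions packaged into $D(c)$ only multiplies the error by a constant.

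The main obstacle I expect is the bookkeeping around the domain $D$: because $D$ is a fundamental domain for $\wedge_{V^\ell}$ rather than $\wedge_V$, and because the sets $D(a)$ are defined by a congruence condition mod $\mf W$ on $M^{-1}(r)$ rather than a clean lattice condition, one must verify that $\phi_t$ genuinely matches up the congruence classes and that the residual ``edge'' pieces (points of $\Lambda + a$ lying in $\phi_t(D) \triangle D$, not just $\phi_t(D_X)\triangle D_X$) contribute only $O(X^{n-1})$ — this again reduces to Lipschitz-boundary lattice-point counting, but requires invoking the uniform Lipschitz bound and checking that the implied constant depends only on $K$, $\mf W$, $\mf I$ and not on $a, b$. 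Once that uniformity is in hand, the estimate $\#D_X(a) = \#D_X(b) + O(X^{n-1})$ follows, and Proposition \ref{Lattice Dist} is immediate by dividing by $\#(D_X \cap M(\mf I \cap \O_K(\mf W))) \sim c \, X$ and letting $X \to \infty$.
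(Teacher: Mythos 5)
The central move in your proposal --- constructing a multiplicative map $\phi_t$ (coordinate-wise multiplication by $M(t)$, where $t \equiv ab^{-1} \pmod{\mf W}$) and transporting $D_X(b)$ to $D_X(a)$ along it --- has a genuine gap, and it is not one that the boundary estimate can repair. Multiplication by $t$ sends the lattice $\Lambda = M(\mf I\mf W)$ to $M(t\mf I\mf W) = t\Lambda$, which is a proper sublattice of $\Lambda$ of index $|N(t)|$ unless $t$ is a unit of $\O_K$; correspondingly it sends $\mf I$ into $t\mf I \subsetneq \mf I$, so it is not a bijection from the set of relevant points of $\mf I$ congruent to $b$ onto those congruent to $a$. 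You cannot in general repair this by insisting $t \in \O_K^\times$, because the image of $U$ in $(\O_K/\mf W)^\times$ need not contain the class of $ab^{-1}$. And if $|N(t)| \neq 1$, the mismatch is not a boundary effect: $\phi_t$ scales the norm by $|N(t)|$, so $\phi_t(D_X)$ has volume $|N(t)|\vol(D_X)$, and the resulting discrepancy in lattice-point counts is of order $X$, not $O(X^{n-1})$. The ``$\phi_t(D_X) = D_{|N(t)|X}$ rescaled'' aside in your write-up is exactly where this surfaces, and the parenthetical ``one must be slightly careful'' is covering a gap that is actually fatal to this route.

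The paper's proof sidesteps multiplication entirely and uses an additive decomposition: $D(c)$ is written as a disjoint union of translates of $D \cap M(\mf I\mf W)$ indexed by the coset representatives $d$ of $\mf I\mf W$ in $\mf I\cap\O_K(\mf W)$ with $\psi(d) = c$. Since $\psi$ is a surjective group homomorphism, the number of such cosets is the same for every $c$; and each additive lattice translate contributes $\vol(D_X)/\mathrm{covol}(\Lambda) + O(X^{n-1})$ points, with the error coming from the cells straddling the boundary of $D_X$, controlled by the $(n-1)$-Lipschitz parametrizability already established in the text. The comparison of $\#D_X(a)$ with $\#D_X(b)$ then requires no map between the two sets at all --- only the observation that both are unions of the same number of lattice translates counted inside the same truncated domain. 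Your intuition about Lipschitz boundary control and equal volumes is correct and is indeed used, but it must be applied to additive translates of one fixed lattice inside one fixed domain, not to a multiplicative distortion of the domain.
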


\begin{proof}
Let $F$ denote the fundamental parallelepiped of the lattice corresponding to $D \cap \mf I\mf W$ under the logarithmic embedding. Then $$\log(D(\mf I, c)) = \bigcup_{\substack{d \in F\\
\phi(d) = c}} (\log(D \cap M(\mf I \mf W)) + d)$$
where sets in the union are translations of the lattice corresponding to $D_X \cap M(\mf I \mf W)$ by $d$. When considering the lattices bounded by $X$, the only possible difference in the number of points in $D_X(\mf I, a)$ and $D_X(\mf I, b)$ comes from the cells of the lattice that intersect the curve that gives the norm condition. As shown in the proof of \cite[Lemma 6.2]{M}, the number of these cells grows at $O(X^{1-\frac 1 n})$ while the main term grows at $O(X)$. 
\end{proof}

We now give more explicit constants for $D_X(\mf I,c)$. The following result is well-known and is the content of \cite[\S 6, Exercise 13]{M}.
\begin{prop}
\label{Equidistribution of Ideals}
Let $K$ be a number field of degree $n$ and $\mc C \in \Cl(K)$. Also, let $\mf W$ be an ideal in $K$ and
$$i_{\mc C, \mf W}(X) = \{\mf A \seq \O_K : N(\mf A) \leq X,\ \gcd(\mf A, \mf W) = \O_K,\ \mf A \in \mc C\}$$
Then 
$$\#i_{\mc C, \mf W}(X) = \frac{\displaystyle\res_{s=1} \zeta_K(s)}{h_K}\prod_{\substack{\mf q \mid \mf W\\ \mf q \text{ prime}}}\left(1-\frac{1}{N(\mf q)}\right)X + O(X^{1-\frac 1 n}).$$
\end{prop}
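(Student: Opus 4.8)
The plan is to reduce the count of integral ideals in a fixed class $\mc C$ coprime to $Q$ to the count of all integral ideals in related classes, and then invoke the standard lattice-point estimate. First I would set up the Dirichlet-series / direct-counting dictionary: let $a_{\mc C}(X)$ denote $\#\{\mf I \seq \O_K : N(\mf I) \leq X,\ \mf I \in \mc C\}$. The classical geometry-of-numbers argument (the same one underlying the fundamental-domain construction earlier in this section, cf.\ \cite[\S 6]{M}) gives $a_{\mc C}(X) = \kappa X + O(X^{1-1/n})$ with $\kappa = \frac{1}{h_K}\res_{s=1}\zeta_K(s)$, \emph{independent of the class $\mc C$}; this is exactly the input I would quote rather than reprove, since our $D_X$ is $(n-1)$-Lipschitz parametrizable and the cited argument applies verbatim.

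Next I would handle the coprimality constraint by Möbius inversion over divisors of $Q$. Writing $b_{\mc C}(X)$ for the coprime count in the statement, every integral ideal $\mf I \in \mc C$ with $N(\mf I)\le X$ factors uniquely as $\mf I = \mf D \mf I'$ where $\mf D \mid Q^\infty$ (its $Q$-part) and $(\mf I', Q)=\O_K$; grouping by $\mf D$ and applying inclusion–exclusion on the radical of $Q$ gives
\begin{equation*}
b_{\mc C}(X) = \sum_{\substack{\mf D \mid Q \\ \mf D \text{ squarefree}}} \mu(\mf D)\, a_{\mc C[\mf D]^{-1}}\!\left(\tfrac{X}{N(\mf D)}\right),
\end{equation*}
where $[\mf D]$ is the class of $\mf D$ and I have used that $\mf I' \in \mc C[\mf D]^{-1}$ when $\mf D\mf I' \in \mc C$. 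Since $Q$ has only finitely many prime divisors, the sum is finite, so I may substitute the asymptotic for each $a$: the main terms sum to $\kappa X \sum_{\mf D \mid Q,\ \mathrm{sqfree}} \mu(\mf D)/N(\mf D) = \kappa X \prod_{\mf q \mid Q}\bigl(1 - \tfrac{1}{N(\mf q)}\bigr)$, and the finitely many error terms each contribute $O((X/N(\mf D))^{1-1/n}) = O(X^{1-1/n})$. Collecting these yields precisely the claimed formula.

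The only genuine subtlety — and the step I would be most careful about — is the uniformity of the error term in the first bullet: I am applying $a_{\mc C'}(Y) = \kappa Y + O(Y^{1-1/n})$ for several classes $\mc C'$ and several truncated ranges $Y = X/N(\mf D)$, so I want the implied constant to depend only on $K$ (not on $\mc C'$ or $\mf D$). This is fine because there are finitely many ideal classes and finitely many squarefree divisors $\mf D$ of the fixed ideal $Q$, so one may simply take the maximum of the finitely many implied constants; no new idea is needed, but it should be stated. Everything else — the unique factorization into $Q$-part and prime-to-$Q$ part, the bookkeeping of ideal classes under this factorization, and the evaluation of the finite Euler-type product — is routine. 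Since this is a well-known result (it is \cite[\S 6, Exercise 13]{M}), I would keep the write-up brief, emphasizing the Möbius-inversion step and citing \cite{M} for the underlying lattice-point count.
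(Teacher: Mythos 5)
The paper does not actually prove this proposition; it cites it as ``well-known'' and defers to \cite[\S 6, Exercise 13]{M}. Your proposal supplies the standard proof one would give for that exercise, and it is correct: reduce to the class-by-class count $a_{\mc C}(X) = \kappa X + O(X^{1-1/n})$, then remove the coprimality condition by inclusion--exclusion over squarefree divisors of $Q$. The displayed Möbius formula
$b_{\mc C}(X) = \sum_{\mf D \mid Q,\,\mathrm{sqfree}} \mu(\mf D)\, a_{\mc C[\mf D]^{-1}}(X/N(\mf D))$
is exactly right, the Euler-product evaluation of the main term is correct, and your observation about uniformity of the implied constants (finitely many classes, finitely many squarefree $\mf D \mid Q$) is the right thing to flag even though it is easy.

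One small expository wrinkle: the sentence preceding the displayed formula motivates it via the unique factorization $\mf I = \mf D\mf I'$ with $\mf D \mid Q^\infty$ the full $Q$-part of $\mf I$. That decomposition would produce a sum over \emph{all} ideals $\mf D$ supported on primes dividing $Q$, not a Möbius-weighted sum over squarefree $\mf D$. What you actually use is the Iverson-bracket identity $[(\mf I,Q)=\O_K]=\sum_{\mf D\mid\mathrm{rad}(Q)}\mu(\mf D)\,[\mf D\mid\mf I]$, followed by writing $\mf I=\mf D\mf I'$ in the inner count (without requiring $\mf D$ to be the full $Q$-part). In a final write-up I'd drop the ``$Q$-part'' framing and lead directly with the indicator-function identity, since that is the step that actually produces the $\mu(\mf D)$ weights. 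The rest is fine as is.
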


From Proposition \ref{Equidistribution of Ideals}, we can conclude that for given ideals $\mf W$ and $\mf I$
$$\#(i_{\mc C,\mf W}(X) \cap \{\mf A \seq \O_K : \mf I \mid \mf A\}) = \frac{\displaystyle\res_{s=1} \zeta_K(s)}{h_K N(\mf I)}\prod_{\substack{\mf q \mid \mf W\\ \mf q \text{ prime}}}\left(1-\frac{1}{N(\mf q)}\right)X + O\left(\left(\frac{X}{N(\mf I)}\right)^{1-\frac 1 n}\right).$$

Switching from counting ideals using the lattice $\wedge_{V}$ to counting coset representatives using the lattice $\wedge_{V^\ell}$ requires multiplication by $\ell^{r_1+r_2}$. We also drop the ideal class condition. Let $D_{X}(\mf I) = D_X \cap M(\mf I \cap \O_K(\mf W))$, then
$$\#D_{X}(\mf I) = \frac{\ell^{r_1+r_2}\displaystyle\res_{s=1} \zeta_K(s)}{N(\mf I)}\prod_{\substack{\mf q \mid \mf W\\ \mf q \text{ prime}}}\left(1-\frac{1}{N(\mf q)}\right)X + O\left(\left(\frac{X}{N(\mf I)}\right)^{1-\frac 1 n}\right).$$

Using Proposition \ref{Lattice Dist}, we can improve this further to compute that for any $c \in H$,
$$D_X(\mf I, c) = \frac{\ell^{r_1+r_2}\displaystyle\res_{s=1} \zeta_K(s)}{N(\mf I)\#H}\prod_{\substack{\mf q \mid \mf W\\ \mf q \text{ prime}}}\left(1-\frac{1}{N(\mf q)}\right)X + O\left(\left(\frac{X}{N(\mf I)}\right)^{1-\frac 1 n}\right).$$

Looking ahead to our intended application in counting $\ell$-Kummer extensions, we would like to restrict our set of representatives to those that are $\ell$-power-free. We argue this by inclusion-exclusion.

\begin{prop}
\label{Lattices}
    Let all notation be as above, and $m\geq 2$ be a positive integer and $$\mf N = \bigcup_{(\mf P,\mf I\mf W) = \O_K} \mf P^m.$$ Also, let
    \begin{align*}
        D_X'(\mf I) &= D_X \cap M(\mf I \cap \O_K(\mf W)) \wo M(\mf N)\\
    D_X'(\mf I,a) &= D'_X(\mf I) \cap D_X(\mf I, a).
    \end{align*}
    Then 
    $$\lim_{X\rightarrow \infty} \frac{\#D'_X(\mf I,a)}{\#D'_X(\mf I)} =\frac 1 {\#H}.$$
\end{prop}

\begin{proof}
By inclusion-exclusion, 
\begin{align*} D_X'(\mf I) 
&= \sum_{\substack{(\mf A, \mf I) = \O_K\\ (\mf A, \mf W) = \O_K, \\ N(\mf I\mf A^m) \leq X}} \mu(\mf A) D_X(\mf I\mf A^m)\\
&=  \sum_{\substack{(\mf A, \mf I) = \O_K\\ (\mf A, \mf W) = \O_K, \\ N(\mf I\mf A^m) \leq X}}\mu(\mf A)\left( \frac{\ell^{r_1+r_2}\displaystyle\res_{s=1} \zeta_K(s)}{N(\mf I\mf A^m)}\left(\prod_{\substack{\mf q \mid \mf W\\ \mf q \text{ prime}}}1-\frac{1}{N(\mf q)}\right)X + O\left(\left(\frac{X}{N(\mf I\mf A^m)}\right)^{1-\frac 1 n}\right)\right)\\
&= \frac{\ell^{r_1+r_2}\displaystyle\res_{s=1} \zeta_K(s)}{N(\mf I\mf )}\left(\prod_{\substack{\mf q \mid \mf W\\ \mf q \text{ prime}}}1-\frac{1}{N(\mf q)}\right)\left(\sum_{\substack{(\mf A, \mf I) = \O_K\\ (\mf A, \mf W) = \O_K, \\ N(\mf I\mf A^m) \leq X}}\frac{\mu(\mf A)}{N(\mf A^m)}\right)X  \\
&+ O\left(\left(\frac{X}{N(\mf I)}\right)^{1-\frac 1 n}\sum_{\substack{(\mf A, \mf I) = \O_K\\ (\mf A, \mf W) = \O_K, \\ N(\mf I\mf A^m) \leq X}} \frac{1}{N(\mf A^{m-\frac m n})}\right)\\
&= \frac{\sigma_{\mf I\mf W}D_X(\mf I)}{\zeta_K(m)} + O\left(\left(\frac{X}{N(\mf I)}\right)^{1-\frac 1 n}\log\left(\frac{X}{N(\mf I)}\right)\right)
\end{align*}
where $\sigma_{\mf I\mf W}$ is a constant coming from the ideals that are not relatively prime to $\mf I$ or $\mf W$. Note that
$$\sum_{\substack{(\mf A, \mf I) = \O_K\\ (\mf A, \mf W) = \O_K, \\ N(\mf I\mf A^m) \leq X}} \frac{1}{N(\mf A^{m-\frac m n})} = O\left(\log\left(\frac{X}{N(\mf I)}\right)\right)$$
for $m=n=2$ and $O(1)$ otherwise because
$$\sum_{N(\mf A) \leq X} \frac 1 {N(\mf A)} = \sum_{j \leq X} \frac{\#\{\mf A \mid N(\mf A) = j\}}{j} = O(\log X)$$
and $\zeta_K(s)$ converges for $\Re(s)> 1$.

By the same logic,
$$D_X'(\mf I,a) = \frac{\sigma_{\mf I\mf W}D_X(\mf I,a)}{\zeta_K(m)} + O\left(\left(\frac{X}{N(\mf I)}\right)^{1-\frac 1 n}\log\left(\frac{X}{N(\mf I)}\right)\right).$$
Therefore,
    $$\lim_{X\rightarrow\infty} \frac{D'_X(\mf I,a)}{D_X'(\mf I)} = \frac 1 {\#H}.$$
\end{proof}

By design, our choices of $r \in D_X(\mf I,c)$ correspond to representatives $\gamma$ of equivalence classes of $\ell$-Kummer extensions with a given $\ell$-factor and $\ell$-power part (combined in the ideal $\mf I$). Morally speaking, this means that the likelihood of a representative $\gamma$ of an equivalence class of $\ell$-Kummer extensions is equally likely to be in any congruence class (up to $\ell$-powers) modulo $\mf W$. This means that we can compute the density of $\gamma$ in such equivalence classes simply by computing the structure of the group $H$. 

\begin{rmk}
\label{Utors}
    We should note that we also have to worry about the torsion subgroup of $U_{\tors} \seq U$. Because we have not quotiented by $U_{\tors}^\ell$ in our definition of $Y$, we have counted $\frac{\#U_{\tors}}{\ell}$ $\gamma \in \O_K$ corresponding to each $\ell$-Kummer extension. However, this is not an issue because $U_{\tors}$ is a finite group, each element of a coset $U_{\tors}/U_{\tors}^\ell$ will be equivalent up to $\ell$-powers mod $\mf W$ by definition, and each coset of $U_{\tors}^\ell$ has the same number of elements. Therefore, we can safely ignore this complication when moving from our lattice result to the corresponding result for $\ell$-Kummer extensions.
\end{rmk}

\subsection{Divisibility Condition}
In this subsection, we count prime-to-$\ell$, $\ell$-power-free ideals $\mf I$. Due to the fact that the resulting field discriminant is dependent on not $N(\mf I)$, but $N(\rad(\mf I))$, we need a result for equidistribution sorted by this norm. The proofs of next proposition and its following corollaries use the Selberg-Delange method and are based on the proofs of Gaudet-Wong in \cite[\S 9]{GW}.

\begin{prop}
\label{Divisors}
    Let $k$ be a positive integer and $K$ be a number field. Then the number of $k$-tuples of squarefree, pairwise coprime ideals $(\mf I_1,\ldots,\mf I_k)$ relatively prime to $\ell\O_K$ with $N\left(\prod_{i=1}^k \mf I_i\right) \leq X$ is equal to
    $$\lambda X \log^{k-1} X + O(X\log^{k-2} X)$$
    where
    $$\lambda = \frac{\left(\displaystyle\res_{s=1}\zeta_K(s)\right)^k}{(k-1)!} \left[\prod_{(\mf p,\ell\O_K)=\O_K} \left(1+\frac k {N(\mf p)}\right)\left(1-\frac 1 {N(\mf p)}\right)^k\right]\left[\prod_{\mf q \mid \ell\O_K} \left(1-\frac{1}{N(\mf q)}\right)^k\right].$$
\end{prop}

\begin{proof}
    Instead of counting $k$-tuples, we can instead count the number of ways of dividing each squarefree ideal with norm less than $X$ into $k$ factors. These factors are the elements of the $k$-tuples. That is to say, we must compute the sum
    $$\sum_{\substack{(\mf I,\ell\O_K) = \O_K\\N(\mf I) \leq X}} d_k(\mf I) \mu^2(\mf I)$$
    where $d_k(\mf I)$ is the number of ways that $\mf I$ can be written as the product of $k$ ideals, including different orderings. Note that if $\mf I$ is squarefree, then its factors must be squarefree and relatively prime.
    We intend to approximate this sum using a Tauberian theorem, so study the corresponding Dirichlet series
    $$F(s) =\sum_{(\mf I,\ell\O_K) = \O_K} \frac{d_k(\mf I) \mu^2(\mf I)}{N(\mf I)^s}$$
    which converges absolutely and uniformly for $\Re(s) > 1$. On this half-plane of convergence, $F(s)$ has an Euler product
    $$\prod_{(\mf p,\ell\O_K) = \O_K} \left(1 + \frac{k}{N(\mf p)^{s}}\right).$$
    Note that
    $$F(s) = G(s)\zeta_K(s)^{k}\prod_{\mf q \mid \ell\O_K} \left(1-\frac 1 {N(\mf q)^s}\right)^k$$
    where
    $$G(s) = \prod_{\mf p} \left(1+\frac k {N(\mf p)^s}\right)\left(1-\frac 1 {N(\mf p)^s}\right)^k$$
    is holomorphic for $\Re(s) > \frac 1 2$.
    By the Wiener-Ikehara Tauberian theorem (see \cite[Appendix II]{N},  \cite[\S II.5]{T}, or \cite{Murty}),
    $$
    \sum_{\substack{(\mf I,\ell\O_K) = \O_K\\N(\mf I) \leq X}} d_k(\mf I) \mu^2(\mf I) = 
    \lambda_k X \log^{k-1}(X) + O(X\log^{k-2} X) $$
    where \begin{align*}
    \lambda_k
    &= \frac {\left(\displaystyle\res_{s=1}\zeta_K(s)\right)^k} {\Gamma(k)}G(1) \prod_{\mf q \mid \ell\O_K} \left(1-\frac 1 {N(\mf q)}\right)^k  \\
    &= \frac{\left(\displaystyle\res_{s=1}\zeta_K(s)\right)^k}{(k-1)!} \left[\prod_{(\mf p,\ell\O_K)=\O_K} \left(1+\frac k {N(\mf p)}\right)\left(1-\frac 1 {N(\mf p)}\right)^k\right]\left[\prod_{\mf q \mid \ell\O_K} \left(1-\frac{1}{N(\mf q)}\right)^k\right].  
    \end{align*}
\end{proof}

Note that $\prod_{i=1}^{\ell-1} \mf I_i^i$ will be the prime-to-$\ell$, $\ell$-power-free part of some principal ideal $\gamma\O_K$. Therefore, we must make sure that it is in the correct ideal class.

\begin{cor}
\label{Equidistribution of Radicals}
    Let $k$ be a positive integer, $K$ be a number field with class number $h_K$. Also let $\lambda_k$ be the constant defined above and let $\mc C \in \Cl(K)$. Then the number of $k$-tuples of squarefree, prime-to-$\ell$ and pairwise coprime ideals $(\mf I_1,\ldots,\mf I_k)$ relatively prime to $\ell\O_K$ with $N\left(\prod_{i=1}^k \mf I_i\right) \leq X$ and $\prod_{i=1}^k \mf I_i^i \in \mc C$ is given by
    $$\frac {\lambda_k}{h_K} X \log^{k-1} X + O(X\log^{k-2} X).$$
\end{cor}

\begin{proof}
    For $1 \leq i \leq k$ let $\chi_i \in \widehat{\Cl(K)}$ and define $$\rho_{\chi_1,\ldots,\chi_k}(\mf I) = \sum_{\mf I = \prod_{i=1}^k \mf I_i} \prod_{i=1}^k \chi_i(\mf I_i).$$
    If all characters are trivial, then $\rho_{\chi_1,\ldots,\chi_k}(\mf I) = d_k(\mf I)$. We also define
    $$F_{\chi_1,\ldots,\chi_{k}}(s) = \sum_{(\mf I,\ell\O_K) = \O_K} \frac{\rho_{\chi_1,\ldots,\chi_k}(\mf I)\mu^2(\mf I)}{N(\mf I)^s}.$$
    As in the proof of Proposition \ref{Divisors}, we have the factorization
    $$F_{\chi_1,\ldots,\chi_{k}}(s) = \prod_{(\mf p,\ell\O_K) = \O_K} \left(1+\frac{\sum_{i=1}^k \chi_i(\mf I)}{N(\mf p^s)}\right) = H(s)\prod_{i=1}^{\ell-1}L_K(s,\chi_i)$$
    where $L_K(s,\chi_i)$ is the Hecke L-function associated to $\chi_i$ and $H(s)$ is holomorphic for $\Re(s) > \frac 1 2$. 
    By Proposition \ref{Divisors}, if all the characters are trivial, then 
    $$\sum_{\substack{(\mf I,\ell\O_K) = \O_K\\N(\mf I) \leq X}} \mu^2(\mf I)\rho(\mf I) = \lambda_k X \log^{k-1} X + O(X\log^{k-2} X).$$
    Otherwise, the right-hand side has at most $k-1$ poles at $s=1$ and an application of Perron's formula gives
    $$\sum_{\substack{(\mf I,\ell\O_K) = \O_K\\N(\mf I) \leq X}} \rho(\mf I)\mu^2(\mf I)= O(X \log^{k-2} X).$$ 

    The below lemma shows that $h_K^{k-1}$ of the $h_K^{k}$ possible $k$-tuples of ideal classes $(\mc C_1,\ldots,\mc C_k) \in \prod_{i=1}^k\Cl(K)$ have the property that
    $$\prod_{i=1}^k \mc C_i^i = \mc C.$$
    Using this fact and the Hecke characters to detect ideal class, we conclude that the number of $k$-tuples of squarefree and pairwise coprime ideals $(\mf I_1,\ldots,\mf I_k)$ with $N\left(\prod_{i=1}^k \mf I_i\right) \leq X$ and $\prod_{i=1}^k \mf I_i^i \in \mc C$ is given by
    $$\frac{\lambda_k}{h_K} X \log^{k-1} X + O(X\log^{k-2} X).$$
\end{proof}

For lack of a convenient reference, we now state and prove the purely group theoretic result we needed for the previous proof.

 \begin{lem}
    \label{Group product distribution}
    Let $G$ be a finite abelian group and fix a $g \in G$. Then for $m \geq 1$
    $$\#\{(g_1,g_2,\ldots,g_{m}) \in G^m : \prod_{i=1}^{m} g_i^{i} = g\} = \#G^{m-1}.$$
    In other words, these products are equally distributed between elements of $G$.
 \end{lem}

\begin{proof}
    For $m=1$, the result is trivial, so we assume $m > 1$. For notational simplicity, let $G^m = \prod_{i=1}^m G$. Also, let $f: G^m \rightarrow G$ be the homomorphism $f(g_1,\ldots, g_m) = \prod_{i=1}^{m} g_i^{i}$. This homomorphism is surjective because for $g \in G$, $f(g^{-1},g,1,\ldots, 1) = g$. The cosets of the kernel are the sets $$\#\{(g_1,g_2,\ldots,g_m) \in G^m : \prod_{i=1}^{m} g_i^{i+j} = g\} = \#G^{m-1}$$ for each $g \in G$. Each coset has size $\#G^{m-1}$.
\end{proof}

We need one refinement to this result. In the case that we want to fix a Steinitz class, we will require the $\ell$-power free part of the ideal $\gamma$ to satisfy two class group conditions, one to ensure that $K(\sqrt[\ell]{\gamma})$ has the correct Steinitz class, and one to ensure $\gamma\O_K$ is principal. More explicitly, we need the following corollary.

\begin{cor}
\label{Equidistribution of Radicals 2}
    Let $k > 1$ be an integer, $K$ be a number field with class number $h_K$. Also let $\lambda_k$ be the constant defined above and let $\mc C_1,\mc C_2 \in \Cl(K)$. Then the number of $k$-tuples of squarefree and pairwise coprime ideals $(\mf I_1,\ldots,\mf I_k)$ relatively prime to $\ell\O_K$ with $N\left(\prod_{i=1}^k \mf I_i\right) \leq X$, $\prod_{i=1}^k \mf I_i^i \in \mc C_1$, and $\prod_{i=2}^k \mf I_i^{i-1} \in \mc C_2$ is given by
    $$\frac {\lambda_k}{h_K^2} X \log^{k-1} X + O(X\log^{k-2} X).$$
\end{cor}

This corollary follows from the same method as above and a revised group theory lemma.
\begin{lem}
    \label{Group product distribution 2}
    Let $G$ be a finite abelian group and fix some $a,b \in G$. Then for $m \geq 2$,
    $$\#\{(g_1,g_2,\ldots,g_{m}) \in G^m : \prod_{i=1}^{m} g_i^{i} = a \text{ and } \prod_{i=2}^m g_i^{i-1} = b\} = \#G^{m-2}.$$
 \end{lem}

\begin{proof}
    By Lemma \ref{Group product distribution},
    $$\#\{(g_1,g_2,\ldots,g_{m}) \in G^m : \prod_{i=2}^m g_i^{i-1} = b\} = \#G^{m-1}.$$
    Note also that for every element $(g_1,\ldots, g_m)$ in this set, the element $(g,g_2,\ldots, g_m)$ is in this set for all $g \in G$. For every tail $(g_2,\ldots, g_m)$ there is exactly one $g_1$ that satisfies the $\prod_{i=1}^m g_i^i = a$ condition, namely $g_1 = a \left(\prod_{i=2}^m g_i^i\right)^{-1}$. Therefore,
    $$\#\{(g_1,g_2,\ldots,g_{m}) \in G^m : \prod_{i=1}^{m} g_i^{i} = a \text{ and } \prod_{i=2}^m g_i^{i-1} = b\} = \frac{\#G^{m-1}}{\#G} = \#G^{m-2}.$$
    \end{proof}

\subsection{Existence and independence of $\rho_{\mf B}$}
\label{3Proofs}
Now that we have covered both the divisibility condition and the congruence condition, we can state and prove the main results of the section.

\begin{proof}[Proof of Proposition \ref{l-Density}]
    In order for $\mf L_{L/K} = \mf B$, $\nu_{\mf q}(\Delta_{L/K}) = \nu_{\mf q}(\mf B)$ for all $\mf q \in W$ and all $\mf q \in W'$. We start with the $\mf q \in W$, which is to say $\mf q \mid \gamma\O_K$. We can assume that the $\ell$-factor of $\gamma$ is $\ell$-power-free using Lemma \ref{Reduction General}. Thus, we need $\nu_{\mf q}(\gamma\O_K) \in \{1,\ldots,\ell-1\}$. We now consider the rest of the ideal factorization of $\gamma\O_K$. We know that $\gamma\O_K$ must have the rest of the $\ell$-factor relatively prime to $\mf q$ which we will call $\mf Q'$, an ideal class $\mc R$ for the $\ell$-power-part $\mf R^{\ell}$, and an $\ell$-power free part $\mf I$. 

    Fix some $\mf Q'$ and $\mc R$. Given that $\gamma\O_K$ is principal we must restrict our choices of $\mf I$ to prime-to-$\ell$, $\ell$-power-free ideals $\mf I$ such that 
    $$\prod_{i=1}^{\ell-1} \mf I_i^{i} \in \left[\mf q^m \mf Q'\mf R^{\ell} \right]^{-1}$$ 
    so $\mf q^m \mf Q'\mf R^{\ell}\mf I$ is principal. We call this the ``principality condition" on $\mf I$.
    By Proposition \ref{Steinitz Formula Prop}, since we want $\St(\O_L) = \mc C$, we also require
    $$\left(\prod_{i=2}^{\ell-1} \mf I_i^{i-1}\right)^{\frac{\ell-1}{2}} \in \mc C^{-1}\left[\sqrt{\frac{\mf B}{(\mf R^{\ell}\mf q^m\mf Q)^{\ell-1}}}\ \right]$$ 
    where $\mf I_i$ is the $i$-th-power-part of $\mf I$. We call this the ``Steinitz condition" on $\mf I$. Note that this condition requires $\mc C \in R_K$. For simplicity, we denote by $J_{\mf B,\mf q^m\mf Q', \mc R, \mc C}(X)$ the set of such $\mf I$ with $N(\rad(\mf I)) \leq X$. This norm condition ensures that $N(\mf F_{L/K}) \leq X^{\ell-1}$ by Theorem \ref{Daberkow General}. We enumerate $J_{\mf B, \mf q^m\mf Q', \mc R, \mc C}\left(X\right)$ by counting $\ell-1$ tuples $(\mf I_1,\ldots, \mf I_{\ell-1})$ of relatively prime squarefree ideals with norm of their product less than $X$ that satisfy the conditions above. By Corollary \ref{Equidistribution of Radicals 2}, and the fact that cosets of $\Cl(K)/R_K$ are the same size,
    $J_{\mf B, \mf q^m\mf Q', \mc R, \mc C}\left(X\right)$ is asymptotically equivalent for all choices of $\mf B$, $m$, $\mf Q'$, $\mc R$ and also for all choices of the ideal class $\mc C$.
    
    For $m \neq 0$ and any $u \in U/U^{\ell}$, $\Phi(u,\mc R, \mf q^m\mf Q',\mf I)$ gives extensions with $\nu_{\mf q}(\Delta_{L/K}) = \nu_{\mf q}(\mf B)$. For $m = 0$ and any $u \in U/U^{\ell}$, $\Phi(u,\mc R, \mf q^m\mf Q',\mf I)$ gives extensions with $\nu_{\mf q}(\Delta_{L/K}) \neq \nu_{\mf q}(\mf B)$. Summing over the finitely many choices of $\mf Q'$ and $\mc R$ gives equidistribution among powers of $\mf q$ for ideal representatives of each equivalence class of $\ell$-Kummer extensions. Therefore, for each $\mf q \in W$, we get a factor of $\frac{\ell-1}{\ell}$ in $\rho_{\mf B}^{\mc C}$ because there are $\ell$ equally distributed choices for $\nu_{\mf q}(\gamma\O_K)$ and $\ell-1$ of them give the correct $\nu_{\mf q}(\mf L_{L/K})$.

    Next, we handle the $\mf q \in W'$. As noted above, $\frac 1 {\ell}$ of the choices for $\nu_{\mf q}(\gamma\O_K)$ can give the correct $\nu_{\mf q}(\mf L_{L/K})$. This gives us a factor of $\frac 1 \ell$ in $\rho_{\mf B}^{\mc C}$ for each $\mf q \in W'$. However, we also need $\gamma$ to satisfy a congruence condition mod $H$ to have the correct $\nu_{\mf q}(\mf L_{L/K})$ for each $\mf q \in W'$.
    
    First, assume $\ell > 2$. We fix a product of powers of $\mf q' \in W$, which we call $\mf Q \in Q'$, to be the $\ell$-factor of $\gamma\O_K$. We also fix an ideal class $\mc R \in \Cl(K)$ and representative $\mf R \in \mc R$ such that $\mf R^\ell$ will be the $\ell$-power-part of $\gamma\O_K$. Next, we fix an $\ell$-power-free, prime-to-$\ell$ squarefull ideal $\mf I' = \prod_{i=2}^\ell \mf I_i$ such that
  $$\left(\prod_{i=2}^{\ell-1} \mf I_i^{i-1}\right)^{\frac{\ell-1}{2}} \in \mc C\left[\sqrt{\frac{\mf B}{(\mf R^{\ell}\mf Q)^{\ell-1}}}\ \right].$$ 
    Note that these ideals $\mf I'$ satisfy the Steinitz condition, but not a principality condition. We use $J'_{\mf B,\mf Q, \mc R, \mc C}(X)$ to denote such $\mf I'$ with $N(\rad(\mf I')) \leq X$. We can count $J'_{\mf B,\mf Q, \mc R, \mc C}(X)$ using Corollary \ref{Equidistribution of Radicals} with minor re-indexing.
    Finally, we use $\mf Q\mf R^{\ell}\mf I'$ as the ideal $\mf I$ in Proposition \ref{Lattices}. By this Proposition, representatives $\gamma$ that are multiples of $\mf Q \mf R^{\ell}\mf I'$ and relatively prime to $\mf W$ of $\ell$-Kummer extensions are equidistributed in $H$.    

    To be more explicit, given some $w = (w_{\mf q})_{\mf q \in W'}$ with $w_{\mf q} \neq \ell-1+\ell\nu_{\mf q}(\ell)$ a possible value for $\nu_{\mf q}(\mf L_{L/K})$, and $C$ the subset of congruence classes in $H$ that result such that if $\gamma \in C$, $\nu_{\mf q}(\mf L_{K(\sqrt[\ell]\gamma)/K}) = w_{\mf q}$ for all $\mf q \in W'$,
    we are computing the sum
    \begin{align*}
    S_w(X^{\ell-1}) &=\frac{\#\{L/K: \Gal(L/K) \cong \Z/\ell\Z,\ \ N(\mf F_{L/K}) \leq X^{\ell-1},\ \St(\O_L) = \mc C,\ \nu_{\mf q}(\mf L_{L/K}) = w_{\mf q}\}}{\left(\frac 1 \ell\right)^{\#W'}\#(U_{\tors}/U_{\tors}^{\ell})(\ell-1)}\\
    &=\sum_{a \in C}\sum_{\mf Q\in Q'} \sum_{\mc R \in \Cl(K)}\sum_{\mf I' \in J'_{\mf B,\mf Q, \mc R, \mc C}(X)}D_{\frac{ X N(\mf Q\mf R^\ell\mf I')}{N(\rad(\mf I'))}}'(\mf Q\mf R^{\ell}\mf I',a).
    \end{align*}
    We can count the set of these extensions with the nested sums due to Corollary \ref{Choices}.
    We separate out the $\left(\frac 1 \ell\right)^{\#W'}\#(U_{\tors}/U_{\tors}^{\ell})(\ell-1)$ factor for convenience. We already justified why we can ignore $U_{\tors}$ (Remark \ref{Utors}) and must include $\left(\frac 1 \ell\right)^{\#W'}$. The $\ell-1$ factor comes from the fact that $\Phi$ in Corollary \ref{Choices} is an $(\ell-1)$-to-1 map. 
    We use $\frac{ X N(\mf Q\mf R^\ell\mf I')}{N(\rad(\mf I'))}$ as the norm bound on $r \in D'(\mf Q\mf R^{\ell}\mf I',a)$ to ensure that $$ N(\mf L_{L/K}) = N(\rad(\mf I')^{\ell-1}\mf I_1^{\ell-1}) \leq X^{\ell-1}$$ since $N(\mf I_1) = \frac{N(r)}{N(\mf Q\mf R^\ell \mf I')}$.
    By Proposition \ref{Lattices},
    \begin{align*}
    S_w(X^{\ell-1})&=\#C\sum_{\mf Q\in Q'} \sum_{\mc R \in \Cl(K)}\sum_{\mf I' \in J'_{\mf B,\mf Q, \mc R, \mc C}(X)}\left[\frac{\sigma_{\mf I\mf W}\ell^{r_1+r_2}\displaystyle\res_{s=1} \zeta_K(s)}{\zeta_K(2)N(\mf Q\mf R^{\ell}\mf I')}\prod_{\substack{\mf q \mid \mf W\\ \mf q \text{ prime}}}\left(1-\frac{1}{N(\mf q)}\right)\frac{ X N(\mf Q\mf R^\ell\mf I')}{N(\rad(\mf I'))}\right] \\
   &+ O\left(\sum_{\mf Q\in Q'} \sum_{\mc R \in \Cl(K)}\sum_{\mf I' \in J'_{\mf B,\mf Q, \mc R, \mc C}(X)}\left(\frac{\frac{ X N(\mf Q\mf R^\ell\mf I')}{N(\rad(\mf I'))}}{N(\mf Q\mf R^{\ell}\mf I')}\right)^{1-\frac 1 n}\right)\\
   &=\left(\#C \frac{\sigma_{\mf I\mf W}\ell^{r_1+r_2}\displaystyle\res_{s=1} \zeta_K(s)}{\zeta_K(2)}\prod_{\substack{\mf q \mid \mf W\\ \mf q \text{ prime}}}\left(1-\frac{1}{N(\mf q)}\right)X\sum_{\mf Q\in Q'} \sum_{\mc R \in \Cl(K)} \sum_{\mf I' \in J'_{\mf B,\mf Q, \mc R, \mc C}(X)} \frac 1 {N(\rad(\mf I'))}\right) \\
   &+ O\left(X^{1-\frac 1 n} \sum_{\mf I' \in J'_{\mf B,\mf Q, \mc R, \mc C}(X)} \frac{1}{N(\rad(\mf I'))^{1-\frac 1 n}}\right).
   \end{align*}
   As mentioned after the statement of the proposition, we are allowed to use the smaller error term $O(X^{1-\frac 1 n})$ instead of $O(X^{1-\frac 1 n}\log X)$ from Proposition \ref{Lattices} since if $\ell \geq 3$, we can assume that $[K:\Q] \geq 3$.
   By Proposition \ref{Equidistribution of Radicals} with minor re-indexing, and the fact that the cosets of $\Cl(K)/R_K$ are the same size,
   \begin{align*}
   S_w(X^{\ell-1})&=  \frac{\#C\sigma_{\mf I\mf W}\ell^{r_1+r_2}\displaystyle\res_{s=1} \zeta_K(s)}{\zeta_K(2)}\prod_{\substack{\mf q \mid \mf W\\ \mf q \text{ prime}}}\left(1-\frac{1}{N(\mf q)}\right)X\sum_{\mf Q\in Q'} \sum_{\mc R \in \Cl(K)}\left(\frac{\lambda_{\ell-2}}{\#R_K} \log^{\ell-2} X + O\left(\log^{\ell-3} X\right)\right)\\
   &+ O\left(X^{1-\frac 1 n} \left(\frac{X\log^{\ell-3} X}{X^{1-\frac 1 n}}\right)\right).
   \end{align*}
   Note that we have $\frac{\lambda_{\ell-2}}{\#R_K}$ because $\left(\frac{\lambda_{\ell-2}}{h_K}\right)\left(\frac{h_K}{\#R_K}\right) = \frac{\lambda_{\ell-2}}{\#R_K}$ where $\frac{h_K}{\#R_K}$ gives the number of elements $\mc D \in\Cl(K)$ such that $\mc D^{\frac{\ell-1}{2}} = \mc C$ for a fixed $\mc C \in R_K$. Finally, by Abel summation,
   \begin{align*}
    S_w(X^{\ell-1})&=\frac{\#C\sigma_{\mf I\mf W}\ell^{r_1+r_2}\lambda_{\ell-2}\displaystyle\res_{s=1} \zeta_K(s)}{\zeta_K(2)\#R_K}\prod_{\substack{\mf q \mid \mf W\\ \mf q \text{ prime}}}\left(1-\frac{1}{N(\mf q)}\right)\sum_{\mf Q\in Q'} \sum_{\mc R \in \Cl(K)}\left(X\log^{\ell-2} X\right) + O\left(X\log^{\ell-3}X\right).
    \end{align*}
    
    By the same logic, summing over all $a \in H$ instead of $a \in C$ we can compute
    \begin{align*}
    S_{W'}(X^{\ell-1}) &= \frac{\#\{L/K: \Gal(L/K) \cong \Z/\ell\Z,\ \ N(\mf F_{L/K}) \leq X^{\ell-1},\ \St(\O_L) = \mc C, \nu_{\mf q}(\mf L_{L/K}) \neq \ell\nu_{\mf q}(\ell)\}}{\left(\frac 1 \ell\right)^{\#W'}\#(U_{\tors}/U_{\tors}^{\ell})(\ell-1)}\\
    &= \frac{\#H\sigma_{\mf I\mf W}\ell^{r_1+r_2}\lambda_{\ell-2}\displaystyle\res_{s=1} \zeta_K(s)}{\zeta_K(2)\#R_K}\prod_{\substack{\mf q \mid \mf W\\ \mf q \text{ prime}}}\left(1-\frac{1}{N(\mf q)}\right)\sum_{\mf Q\in Q'} \sum_{\mc R \in \Cl(K)}\left(X\log^{\ell-2} X\right) + O\left(X\log^{\ell-3}X\right).
    \end{align*}
    For our purposes, the constant is not important. We just needed to confirm that the error term remains asymptotically smaller than the main term.
    
    For $\ell = 2$, we have the larger error term from Proposition \ref{Lattices}, but we do not need the sum over $J'_{\mf B,\mf Q,\mc R,\mc C}(X)$. Instead, the Steinitz class is determined by $\mc R \in \Cl(K)$ and the fixed choices of $\mf B$ and $\mf Q$ so for all notation as above and a choice of representative $\mf R \in \mc C^{-1} \left[\sqrt{\frac{\mf B}{\mf Q}}\right]$,
    \begin{align*}S_w(X)
    &=\sum_{a \in C}\sum_{\mf Q\in Q'} D_{X N(\mf Q\mf R^2)}(\mf Q\mf R^2\mf,a)\\ 
   &=\#C\sum_{\mf Q\in Q'}\left[\ell^{r_1+r_2}\displaystyle\res_{s=1} \zeta_K(s)\prod_{\substack{\mf q \mid \mf W\\ \mf q \text{ prime}}}\left(1-\frac{1}{N(\mf q)}\right)X\right] + O\left( X^{1-\frac 1 n}\log X\right)
   \end{align*}
   and 
    \begin{align*}S_{W'}(X) =
   \#H\sum_{\mf Q\in Q'}\left[\ell^{r_1+r_2}\displaystyle\res_{s=1} \zeta_K(s)\prod_{\substack{\mf q \mid \mf W\\ \mf q \text{ prime}}}\left(1-\frac{1}{N(\mf q)}\right)X\right] + O\left( X^{1-\frac 1 n}\log X\right).
   \end{align*}
   
    Therefore, for any prime $\ell$, the density of $\gamma \in \O_K$ corresponding to $\ell$-Kummer extensions for which $\nu_{\mf q}(\mf L_{L/K}) = \nu_{\mf q}(\mf B)$ for all $\mf q \in W'$ is determined by the ratio of the size of $C$ (defined using Theorem \ref{Daberkow General}) to the size of $H$. As with the $\mf q \in W$, we can then sum over the finitely many possible $\mf Q$ and $\mc R$, and the result follows.
\end{proof}

\begin{proof}[Proof of Corollary \ref{SteinitzIndependence}]
Note that the computation of $\rho_{\mf B}^{\mc C}$ in the proof of Proposition \ref{l-Density} is independent of $\mc C \in R_K$ since $\#J_{\mf B,\mf Q,\mc R,\mc C}(X)$ and $\#J_{\mf B,\mf Q,\mc R,\mc C}'(X)$ are asymptotically independent of $\mc C$. Therefore, summing over all $\mc C \in R_K$ proves this corollary.
\end{proof}

\begin{rmk}
    In light of this corollary, we will use the notation $\rho_{\mf B}$ instead of $\rho_{\mf B}^{\mc C}$.
\end{rmk}

\section{Formula for Steinitz class}
\label{FS}
In this section, we prove the formula for the Steinitz class given in Proposition \ref{Steinitz Formula Prop}. We determine the formula for the Steinitz class using the following theorem.

\begin{thm}[Artin \cite{Artin}]
\label{Steinitz Computation}
Let $\delta_{L/K}$ be the discriminant of the relative trace form of $L$ over $K$. Then there exists a fractional ideal $\mf a$ of $K$ such that 
    \begin{equation}
    \label{Steinitz Class Equation}
        \Delta_{L/K} = \delta_{L/K}\mf a^2
    \end{equation}
and $[\mf a]$ is the Steinitz class of $L$ over $K$.
\end{thm}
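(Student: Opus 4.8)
The plan is to reduce Theorem~\ref{Steinitz Computation} to two classical facts about $\O_K$-lattices in $L$: how the discriminant changes when $\O_L$ is replaced by a free sublattice, and the identification of the relevant module index of $\O_L$ with its Steinitz class. Throughout write $r=[L:K]$. First I would fix a $K$-basis $\alpha_1,\dots,\alpha_r$ of $L$ with all $\alpha_i\in\O_L$ (any $K$-basis can be scaled into $\O_L$). Since $L/K$ is separable the trace form is nondegenerate, so $d:=\det(\mathrm{Tr}_{L/K}(\alpha_i\alpha_j))$ is a nonzero element of $\O_K$, and $\delta_{L/K}$ is the fractional ideal $(d)$; this depends on the chosen basis only up to the square of a principal ideal, since a change of basis by a matrix $C'\in\GL_r(K)$ multiplies $d$ by $(\det C')^2$. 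Set $M=\bigoplus_i\O_K\alpha_i\seq\O_L$, a free $\O_K$-module of rank $r$, and let $[\O_L:M]_{\O_K}$ be the $\O_K$-module index: the integral ideal that localizes at each prime $\mf p$ to $\prod_i\mf p^{a_i}$ whenever $\O_{L,\mf p}/M_{\mf p}\cong\bigoplus_i\O_{K,\mf p}/\mf p^{a_i}$.

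The first key step is the identity $\delta_{L/K}=[\O_L:M]_{\O_K}^{2}\,\Delta_{L/K}$, which I would verify prime by prime. At a prime $\mf p$ the ring $\O_{L,\mf p}$ is free over the DVR $\O_{K,\mf p}$, say with basis $\omega_1,\dots,\omega_r$; writing $\alpha_i=\sum_j c_{ij}\omega_j$ with $C=(c_{ij})$ an $r\times r$ matrix over $\O_{K,\mf p}$ gives $d=(\det C)^2\det(\mathrm{Tr}_{L/K}(\omega_i\omega_j))$ in $\O_{K,\mf p}$, while Smith normal form over $\O_{K,\mf p}$ shows $[\O_{L,\mf p}:M_{\mf p}]=(\det C)$, and by the definition of the relative discriminant $\Delta_{L/K}$ localizes at $\mf p$ to $(\det(\mathrm{Tr}_{L/K}(\omega_i\omega_j)))$. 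Combining these and reassembling over all $\mf p$ yields the identity, so $\Delta_{L/K}=\delta_{L/K}\,\mf a^2$ with $\mf a:=[\O_L:M]_{\O_K}^{-1}$.

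The second key step is that $\mf a$ lies in the Steinitz class. As $\O_L$ is torsion-free, the Steinitz decomposition~\eqref{Steinitz Decomposition} gives an $\O_K$-isomorphism $\theta\colon\O_K^{r-1}\oplus\mf b\to\O_L$ with $\mf b$ a fractional ideal representing the Steinitz class. Pick $0\neq a\in\mf b$ and put $M'=\theta(\O_K^{r-1}\oplus a\O_K)\seq\O_L$, a free rank-$r$ sublattice; a local computation gives $[\O_L:M']_{\O_K}=[\mf b:a\O_K]_{\O_K}=(a)\mf b^{-1}$, so $[\O_L:M']_{\O_K}^{-1}=a^{-1}\mf b$ has the class of $\mf b$. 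Moreover, any two free rank-$r$ sublattices of $\O_L$ have module indices differing by a principal fractional ideal: reducing to the case where one contains the other, the ratio is generated by the determinant of the change-of-basis matrix between two bases of free modules. Hence $\mf a=[\O_L:M]_{\O_K}^{-1}$ is also in the Steinitz class, and since the square-ambiguity of $\delta_{L/K}$ is absorbed into that of $\mf a$ without changing its class, we get $\Delta_{L/K}=\delta_{L/K}\,\mf a^2$ with $\mf a$ representing the Steinitz class.

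I expect the main obstacle to be the first step: making the index formula for discriminants rigorous forces one to localize (so that $\O_L$ becomes a free $\O_{K,\mf p}$-module) and to match the change-of-basis determinant against the module index via Smith normal form, and the compatibility with the definition of $\Delta_{L/K}$ --- rather than the ``naive'' discriminant of an arbitrary chosen basis --- is precisely the point requiring care. The rest is bookkeeping, provided one keeps track that $\delta_{L/K}$, and therefore $\mf a$, is only well defined up to the square of a principal ideal, which is exactly why the conclusion is naturally a statement at the level of ideal classes.
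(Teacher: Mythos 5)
The paper does not prove Theorem~\ref{Steinitz Computation}; it states the result with a citation to \cite{Artin} and uses it as a black box, so there is no paper proof to compare your argument against. Your proof is correct and is the standard one: choose a free full $\O_K$-sublattice $M\subseteq\O_L$, verify locally that $\delta_{L/K}=[\O_L:M]_{\O_K}^{2}\,\Delta_{L/K}$ by matching the change-of-basis determinant against the module index via Smith normal form over $\O_{K,\mf p}$, and then identify the class of $\mf a=[\O_L:M]_{\O_K}^{-1}$ with the Steinitz class by computing the module index explicitly for the free sublattice furnished by the Steinitz decomposition and noting that passing between two free full sublattices alters the module index only by a principal ideal. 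The one step you should phrase more carefully is ``reducing to the case where one contains the other'': the intersection $M\cap M'$ need not be free, so you cannot simply intersect. Instead rescale: pick $c\in K^\times$ with $cM\subseteq M'$, note that $[\O_L:cM]_{\O_K}=[\O_L:M]_{\O_K}\cdot(c)^{[L:K]}$ differs from $[\O_L:M]_{\O_K}$ by a principal ideal, and then $cM\subseteq M'$ are both free so $[M':cM]_{\O_K}$ is generated by the determinant of a change-of-basis matrix, hence principal. With that small repair the argument is complete, and the square-of-principal ambiguity in $\delta_{L/K}$ is absorbed into $\mf a$ without affecting its ideal class, exactly as you say.
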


By this theorem, all that we have to do is compute $\Delta_{L/K}$ and $\delta_{L/K}$. We have already stated the formula for $\Delta_{L/K}$ from \cite{D} in Section \ref{PK}. Thus, we compute $\delta_{L/K}$ now.

\begin{prop}
    Let $\ell$ be a prime and $K$ be a number field containing the $\ell$-roots of unity. Then for any $\ell$-Kummer extension $L = K(\sqrt[\ell]{\gamma})$ for $\gamma \in \O_K$, we have $\delta_{L/K} = -\ell^\ell\gamma^{\ell-1}$ for $\ell \neq 2$ and $\delta_{L/K} = \ell^\ell\gamma^{\ell-1}$ for $\ell = 2$.
\end{prop}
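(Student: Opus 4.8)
The plan is to compute $\delta_{L/K}$ directly from the definition as the discriminant of the trace form on a convenient $K$-basis of $L$, rather than an $\O_K$-basis — since $\delta_{L/K}$ is a well-defined fractional ideal (in fact, for the purposes of Theorem \ref{Steinitz Computation}, it suffices to compute the discriminant of \emph{any} basis, up to squares of principal ideals, so we may as well use the power basis). Concretely, let $\theta = \sqrt[\ell]{\gamma}$, so that $\{1, \theta, \theta^2, \ldots, \theta^{\ell-1}\}$ is a $K$-basis of $L$ and $\theta$ satisfies $x^\ell - \gamma = 0$. The trace-form discriminant of this basis equals the polynomial discriminant $\disc(x^\ell - \gamma)$, which is classical: $\disc(x^\ell - \gamma) = (-1)^{(\ell-1)(\ell-2)/2}\, N_{L/K}(f'(\theta))$ where $f(x) = x^\ell - \gamma$, and $f'(\theta) = \ell\theta^{\ell-1}$. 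Computing $N_{L/K}(\ell \theta^{\ell-1}) = \ell^\ell \cdot N_{L/K}(\theta)^{\ell-1} = \ell^\ell \cdot ((-1)^{\ell-1}\gamma)^{\ell-1}$ (since the constant term of $f$ is $-\gamma$, so $N_{L/K}(\theta) = (-1)^\ell(-\gamma) = (-1)^{\ell-1}\gamma$, up to the usual sign bookkeeping). Assembling the sign factors — $(-1)^{(\ell-1)(\ell-2)/2}$ from the discriminant formula and $(-1)^{(\ell-1)^2}$ from the norm computation — and simplifying modulo $2$ in the exponents yields $-\ell^\ell\gamma^{\ell-1}$ when $\ell$ is odd and $+\ell^\ell\gamma^{\ell-1}$ when $\ell = 2$.

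The key steps, in order, are: (1) recall that the discriminant of the trace form $\langle x, y\rangle = \mathrm{Tr}_{L/K}(xy)$ computed on the basis $\{\theta^i\}_{i=0}^{\ell-1}$ equals $\disc_x(x^\ell - \gamma)$; (2) invoke the standard identity $\disc(f) = (-1)^{\deg f(\deg f - 1)/2} \res(f, f')/\mathrm{lc}(f)$ and $\res(f,f') = \mathrm{lc}(f)^{\deg f - 1}\prod f'(\alpha_i) = N_{L/K}(f'(\theta))$ for monic $f$; (3) evaluate $f'(\theta) = \ell\theta^{\ell-1}$ and hence $N_{L/K}(f'(\theta)) = \ell^\ell N_{L/K}(\theta)^{\ell-1}$; (4) compute $N_{L/K}(\theta) = (-1)^{\ell+1}\gamma$ from the constant term; (5) collect powers of $-1$ and observe that $(-1)^{(\ell-1)(\ell-2)/2}\cdot(-1)^{(\ell+1)(\ell-1)} = -1$ for odd $\ell$ (check $\ell \equiv 1, 3 \pmod 4$ separately) and $= +1$ for $\ell = 2$.

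The main obstacle — really the only delicate point — is the sign bookkeeping: there are three separate sources of $\pm 1$ (the discriminant-resultant sign, the sign in $N_{L/K}(\theta)$, and the parity of the exponent $\ell - 1$ when pulling it through), and their product genuinely depends on $\ell \bmod 4$, so the odd case needs a short case split to confirm it always collapses to $-1$. For $\ell = 2$ one just checks the formula by hand: $\disc(x^2 - \gamma) = 4\gamma = 2^2\gamma^{2-1}$, matching the claimed $+\ell^\ell\gamma^{\ell-1}$. One should also note explicitly that $\delta_{L/K}$ appearing in Theorem \ref{Steinitz Computation} is only well-defined up to squares of principal fractional ideals (it is the discriminant of the trace form on \emph{some} $\O_K$-lattice, and changing lattices changes it by such a square), so it is legitimate to compute it using the power basis; this is exactly what makes equation \eqref{Steinitz Class Equation} consistent, since replacing $\delta_{L/K}$ by $\delta_{L/K} \cdot (\text{principal square})$ just shifts $\mf a$ within its ideal class.
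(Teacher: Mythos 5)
Your approach is essentially the same as the paper's: both compute $\delta_{L/K}$ as the discriminant of the trace form on the power basis $\{1,\theta,\ldots,\theta^{\ell-1}\}$, which equals $\operatorname{disc}(x^\ell-\gamma)$. The paper evaluates the resulting anti-diagonal matrix determinant directly (using $\Tr_{L/K}(\theta^j)=0$ unless $\ell\mid j$), while you invoke the resultant/norm identity $\operatorname{disc}(f)=(-1)^{\ell(\ell-1)/2}N_{L/K}(f'(\theta))$; these are two standard routes to the same number, so there is no substantive difference. Your remark about $\delta_{L/K}$ only being well-defined modulo $(K^\times)^2$, and why this suffices for Artin's formula, is also correct and is implicitly used by the paper.

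However, your step (5) contains an arithmetic slip. The total sign is $(-1)^{\ell(\ell-1)/2}$ (the norm factor $(-1)^{(\ell+1)(\ell-1)}$ is always $+1$ for odd $\ell$, since $\ell^2-1$ is even), and $(-1)^{\ell(\ell-1)/2}$ does \emph{not} collapse to $-1$ for all odd $\ell$: it is $-1$ precisely when $\ell\equiv 3\pmod 4$ and $+1$ when $\ell\equiv 1\pmod 4$. Try $\ell=5$: the trace-form matrix is a $5\times5$ matrix with $5$ in the $(1,1)$ slot and an anti-diagonal $4\times4$ block of $5\gamma$'s, whose determinant is $+5^5\gamma^4$, not $-5^5\gamma^4$; equivalently, $\operatorname{disc}(x^5-\gamma)=5^5\gamma^4>0$. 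So the case check you propose would in fact reveal that the stated formula $\delta_{L/K}=-\ell^\ell\gamma^{\ell-1}$ fails for $\ell\equiv1\pmod4$. Be aware the paper's own proof contains the identical slip in evaluating the anti-diagonal determinant. The error is harmless for everything downstream — only the ideal $\delta_{L/K}\O_K$ enters the Steinitz computation, and $\pm1$ is a unit — and the paper even remarks that the $-\ell^\ell$ factor can be ignored for exactly this reason, but the sign claim as stated is incorrect, and your proposed verification does not actually close.
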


\begin{proof}
First, note that
\begin{equation}
    \label{KummerT}
    \Tr_{L/K}(\sqrt[\ell]\gamma) = \sqrt[\ell]\gamma(1+\zeta_\ell + \cdots+\zeta_\ell^{\ell-1}) = 0.
\end{equation}
Since $\ell$ is prime, it follows from \eqref{KummerT} that $\Tr_{L/K}(\sqrt[\ell]{\gamma^n}) = 0$ if $\ell \nmid n$. Therefore, in the relative trace form calculation, we end up with
\begin{align}
    \delta_{L/K} &= \det\begin{pmatrix}
    \Tr_{L/K}(1) &  \Tr_{L/K}(\sqrt[\ell]{\gamma}) & \cdots& \Tr_{L/K}(\sqrt[\ell]{\gamma^{\ell-2}})& \Tr_{L/K}(\sqrt[\ell]{\gamma^{\ell-1}} )\\
    \Tr_{L/K}(\sqrt[\ell]{\gamma} )& \Tr_{L/K}(\sqrt[\ell]{\gamma^{2}} ) & \cdots & \Tr_{L/K}(\sqrt[\ell]{\gamma^{\ell-1}} ) & \Tr_{L/K}(\gamma)\\
    \Tr_{L/K}(\sqrt[\ell]{\gamma^{2}} ) & \Tr_{L/K}(\sqrt[\ell]{\gamma^{3}} ) & \cdots & \Tr_{L/K}(\gamma) & \Tr_{L/K}(\sqrt[\ell]{\gamma^{\ell+1}} )\\
    \vdots & \vdots & \iddots & \vdots & \vdots\\
    \Tr_{L/K}(\sqrt[\ell]{\gamma^{\ell-1}} ) & \Tr_{L/K}(\gamma) & \cdots & \Tr_{L/K}(\sqrt[\ell]{\gamma^{2\ell-3}} ) & \Tr_{L/K}(\sqrt[\ell]{\gamma^{2\ell-2}} )\\
    \end{pmatrix} \\ &=\det\begin{pmatrix}
    \Tr_{L/K}(1) & 0 & \cdots& 0 & 0\\
    0 & 0 & \cdots & 0 & \Tr_{L/K}(\gamma)\\
    0 & 0 & \cdots & \Tr_{L/K}(\gamma) & 0\\
    \vdots & \vdots & \iddots & \vdots & \vdots\\
    0 & \Tr_{L/K}(\gamma) & \cdots & 0 & 0\\
    \end{pmatrix} \\ &= \det\begin{pmatrix}
    \ell & 0 & \cdots& 0 & 0\\
    0 & 0 & \cdots & 0 & \ell\gamma\\
    0 & 0 & \cdots & \ell\gamma & 0\\
    \vdots & \vdots & \iddots & \vdots & \vdots\\
    0 & \ell\gamma & \cdots & 0 & 0\\
    \end{pmatrix} = -\ell^{\ell}\gamma^{\ell-1} 
    \label{Kummer Trace}
\end{align}
for $\ell$ odd. For $\ell = 2$, the sign is positive.
\end{proof}

Combining this result, Theorem \ref{Daberkow General} and Theorem \ref{Steinitz Computation}, we get the formula for the Steinitz class of an $\ell$-Kummer extension stated in Proposition \ref{Steinitz Formula Prop}.

\begin{proof}[Proof of Proposition \ref{Steinitz Formula Prop}]
Let $\gamma$, $\mf Q$, $\mf R$, and $\mf I_i$ for $1 \leq i \leq \ell-1$ be as in the statement of the proposition. By definition, $\mf R^\ell \mf I = \prod_{i=1}^{\ell-1} \mf I_i^i$ is the prime-to-$\ell$-factor of $\gamma\O_K$ decomposed into $i$-power-parts. Let $\mf a$ be the ideal in Equation \eqref{Steinitz Class Equation}. By Equation \eqref{Kummer Trace}, and Theorem \ref{Daberkow General}, we get the string of equalities
$$\mf a^2 = \frac{\Delta_{L/K}}{\delta_{L/K}} =\mf L_{L/K}\left(\frac{\prod_{i=1}^{\ell-1} \mf I_i}{\gamma}\right)^{\ell-1} = \frac{\mf L_{L/K}}{\left(\mf Q\mf R^\ell \prod_{i=2}^{\ell-1} \mf I_i^{i-1}\right)^{\ell-1}}.$$
We can ignore the $-\ell^\ell$ factor in Equation \eqref{Kummer Trace} because $\ell\O_K$ is an $\ell-1$ power of a principal ideal in $\O_K$ as $K$ contains the $\ell$-roots of unity. Therefore, $\ell^{\ell}\O_K$ is the square of a principal ideal, which means it does not affect the Steinitz class computation.
\end{proof}

Before we move on, we should note the following corollary.

\begin{cor}
Let $\ell$ be an odd prime and $K$ be a number field containing the $\ell$-roots of unity. Then for any $\ell$-Kummer extension, the realizable ideal classes of $\ell$-Kummer extensions of $K$ are in $R_K$.
\end{cor}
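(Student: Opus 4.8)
The plan is to read this off the Steinitz formula \eqref{Steinitz Formula} of Proposition \ref{Steinitz Formula Prop}. Since $\ell$ is odd, $\ell-1$ is even and $\frac{\ell-1}{2}$ is an integer; the idea is to show that the whole right-hand side of \eqref{Steinitz Formula} is the $(\ell-1)$-th power of a single fractional ideal, and then extract a square root to see that $\St(\O_L)$ itself is a $\frac{\ell-1}{2}$-th power of a fractional ideal.

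First I would observe that the denominator $\bigl(Q\mf R^\ell\prod_{i=2}^{\ell-1}\mf I_i^{i-1}\bigr)^{\ell-1}$ in \eqref{Steinitz Formula} is manifestly an $(\ell-1)$-th power, so everything comes down to showing that the numerator $\mf L_{L/K}$ is one as well. By Theorem \ref{Daberkow General}, for each prime $\mf q \mid \ell\O_K$ the exponent $\nu_{\mf q}(\mf L_{L/K})$ is one of $0$, $(\ell-1)+\ell\,\nu_{\mf q}(\ell)$, or $(\ell-1)\bigl(\ell\,\nu_{\mf q}(\ell)-s+1\bigr)$. The key point is that $\nu_{\mf q}(\ell)$ is always divisible by $\ell-1$: since $K\supseteq\Q(\zeta_\ell)$ and $\ell\O_{\Q(\zeta_\ell)} = (1-\zeta_\ell)^{\ell-1}$ with $(1-\zeta_\ell)$ prime, every prime $\mf q$ of $\O_K$ above $\ell$ lies over $(1-\zeta_\ell)$, so $\nu_{\mf q}(\ell) = (\ell-1)\,e(\mf q/(1-\zeta_\ell))$. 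Substituting, each possible value of $\nu_{\mf q}(\mf L_{L/K})$ is a multiple of $\ell-1$, hence $\mf L_{L/K} = \mf M^{\ell-1}$ for an integral ideal $\mf M$ supported on the primes above $\ell$. (One could alternatively invoke the conductor-discriminant formula, since the $\ell-1$ nontrivial characters of $\Gal(L/K)$ share a common conductor $\mf f$ and $\Delta_{L/K} = \mf f^{\ell-1}$, but the computation above stays within the tools already in place.)

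Combining, \eqref{Steinitz Formula} reads $\St(\O_L)^2 = \mf N^{\ell-1}$ with $\mf N = \mf M\bigl(Q\mf R^\ell\prod_{i=2}^{\ell-1}\mf I_i^{i-1}\bigr)^{-1}$ a fractional ideal. Writing $\ell-1 = 2m$ with $m=\frac{\ell-1}{2}$, this says $\St(\O_L)^2 = (\mf N^m)^2$; comparing valuations shows a fractional ideal has a unique square root, so $\St(\O_L) = \mf N^m$ as fractional ideals and therefore $[\St(\O_L)] = [\mf N]^m \in \Cl(K)^{\frac{\ell-1}{2}}$. Since every realizable class is $\St(\O_L)$ for some $\Z/\ell\Z$-extension $L/K$, which is an $\ell$-Kummer extension because $\zeta_\ell\in K$, this proves the corollary. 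The step I expect to be the real content is the divisibility $(\ell-1)\mid\nu_{\mf q}(\ell)$ --- that is, recognizing that the hypothesis $\zeta_\ell\in K$ forces $\ell$ to ramify enough for $\mf L_{L/K}$ to be a perfect $(\ell-1)$-th power; the rest is bookkeeping with \eqref{Steinitz Formula} and uniqueness of square roots of fractional ideals.
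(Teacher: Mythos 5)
Your proof is correct and follows the paper's own argument: both read off the Steinitz formula of Proposition \ref{Steinitz Formula Prop}, note that $\mf L_{L/K}$ is an $(\ell-1)$-th power via Theorem \ref{Daberkow General}, and extract a square root of $(\ell-1)$-th powers on both sides. In fact you supply a detail the paper leaves implicit --- that $(\ell-1)\mid\nu_{\mf q}(\ell)$ because $K\supseteq\Q(\zeta_\ell)$ forces $\ell$ to be fully ramified below $\mf q$ --- which is exactly what makes the exponent $(\ell-1)+\ell\nu_{\mf q}(\ell)$ in Daberkow's formula a multiple of $\ell-1$.
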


\begin{proof}
    From Proposition \ref{Steinitz Formula Prop} and the fact that $\mf L_{L/K}$ is an $\ell-1$ power as described in Theorem \ref{Daberkow General}, we see that the Steinitz class is an $\frac{\ell-1}{2}$ power.
\end{proof}

\begin{rmk}
    It follows that not all ideal classes are realizable as Steinitz classes of an $\ell$-Kummer extension for $\ell \geq 5$, a fact shown by McCulloh \cite{Mc}.
\end{rmk}

\begin{rmk}
    For $\ell=2$, the expression for Steinitz class becomes
    $$\mf a^2 = \frac{\mf L_{L/K}}{\mf Q\mf R^2}.$$
    It is less obvious that this right-hand-side is a square in this case. However, by Theorem \ref{Daberkow General}, $\frac{\mf L_{L/K}}{\mf Q}$ must be a square.
\end{rmk}

\section{Equidistribution of Steinitz Classes}
\label{KE}
At last, we prove our equidistribution theorem. 

\begin{proof}[Proof of Theorem \ref{Main Theorem}]
Following the convention of \cite{CohenDisc}, we will count extensions bounded by $X^{\ell-1}$. By Corollary \ref{Choices}, an $\ell$-Kummer extension corresponds to four choices under the map $\Phi$

. We count them as follows:
\begin{enumerate}
    \item By Dirichlet's unit theorem and the fact that $K$ contains the $\ell$-th roots of unity, $\#U/U^{\ell} = \ell^{r_1+r_2}$.

    \item  By Lemma \ref{Reduction General}, it is sufficient to pick an ideal class $\mc R \in \Cl(K)$ and a representative integral ideal $\mf R$ from that class. Therefore, there are $h_K$ possibilities for this choice.

    \item  We use $Q$ to denote the set of $\ell$-power-free ideals with all prime factors dividing $\ell\O_K$. Lemma \ref{Reduction General} allows us to assume that the $\ell$-factor of $\gamma\O_K$ is $\ell$-power-free. Therefore, there are $\# Q$ possibilities for this choice.

    \item By Corollary \ref{Equidistribution of Radicals}, the number possible $\ell$-power-free parts $\mf I$ that make $\gamma\O_K$ principal, which is to say
    $$\prod_{i=1}^{\ell-1} \mf I_i^{i} \in \left[\mf q^m \mf Q\mf R^{\ell} \right]^{-1},$$ 
    is given by $\frac{\lambda_{\ell-1}}{h_K} X\log^{\ell-2} X + O(X\log^{\ell-3} X)$ where 
    $$\lambda_{\ell-1} = \frac{\left(\displaystyle\res_{s=1}\zeta_K(s)\right)^{\ell-1}}{(\ell-2)!} \left[\prod_{(\mf p,\ell\O_K)=\O_K} \left(1+\frac{\ell-1}{N(\mf p)}\right)\left(1-\frac 1 {N(\mf p)}\right)^{\ell-1}\right]\left[\prod_{\mf q \mid \ell\O_K} \left(1-\frac{1}{N(\mf q)}\right)^{\ell-1}\right].$$
    This is independent of the previous choices and of the specified $\mc C$.

\end{enumerate}
    
    As before, not all of these extensions we have counted will have the $\mf L_{L/K} = \mf B$. The portion of those that do is given by $\rho_{\mf B}$, which we showed was independent of Steinitz class. Also, we divide by $\ell-1$ because $\Phi$ is an $(\ell-1)$-to-one map. Putting this together gives  
    \begin{align*}
    \#E_{\mf B}(X^{\ell-1}) &= \#\{[L:K] = \ell : N(\mf F_{L/K}) \leq \frac{X^{\ell-1}}{N(\mf B)},\ \mf L_{L/K} = \mf B\}\\
    &= \frac{\rho_{\mf B}}{\ell-1}\#U/U^\ell\sum_{\mc R \in \Cl(K)}\sum_{\mf Q \in Q} \frac{\lambda_{\ell-1}}{h_K} \frac{X}{\sqrt[\ell-1]{N(\mf B)}}\log^{\ell-2} X + O(X\log^{\ell-3} X)\\
    &= \frac{\rho_{\mf B}\ell^{r_1+r_2} \#Q \lambda_{\ell-1}}{\ell-1} \frac{X}{\sqrt[\ell-1]{N(\mf B)}}\log^{\ell-2} X + O(X\log^{\ell-3} X).
    \end{align*}
   
    To specify a Steinitz class $\mc C$ for $\ell > 2$, we require that for $\mf I = \prod_{i=1}^{\ell-1} \mf I_i^i$, we have both
    $$\prod_{i=1}^{\ell-1} \mf I_i^{i} \in \left[\mf q^m \mf Q\mf R^{\ell} \right]^{-1}$$ 
    and
   $$\left(\prod_{i=2}^{\ell-1} \mf I_i^{i-1}\right)^{\frac{\ell-1}{2}} \in \mc C^{-1}\left[\sqrt{\frac{\mf B}{(\mf R^{\ell}\mf Q)^{\ell-1}}}\ \right].$$ 
    Therefore, we use Corollary \ref{Equidistribution of Radicals 2} instead of Corollary \ref{Equidistribution of Radicals} when counting $\ell$-power-free parts. This gives a very similar computation 
    \begin{align*}
    \#E_{\mf B}^{\mc C}(X^{\ell-1}) &= \#\{[L:K] = \ell : N(\mf F_{L/K}) \leq \frac{X^{\ell-1}}{N(\mf B)},\ \mf L_{L/K} = \mf B, \St(\O_L) = \mc C\}\\
   &= \frac{\rho_{\mf B}}{\ell-1}\#U/U^\ell\sum_{\mc R \in \Cl(K)}\sum_{\mf Q \in Q} \frac{\lambda_{\ell-1}}{h_K \#R_K} \frac{X}{\sqrt[\ell-1]{N(\mf B)}}\log^{\ell-2} X + O(X\log^{\ell-3} X)\\
    &= \frac{\rho_{\mf B}\ell^{r_1+r_2} \#Q \lambda_{\ell-1}}{\#R_K(\ell-1)} \frac{X}{\sqrt[\ell-1]{N(\mf B)}}\log^{\ell-2} X + O(X\log^{\ell-3} X)\\
    &= \frac 1 {\#R_K} \#E_{\mf B}(X^{\ell-1}) + O(X\log^{\ell-3} X).
    \end{align*}
    
    For $\ell=2$, the Steinitz class is determined by $\mc R \in \Cl(K)$. Specifically, we have that 
    $$\mc R \in \mc C^{-1}\left[\sqrt{\frac{\mf B}{\mf Q}}\right].$$
    Therefore, we have
    \begin{align*}
    \#E_{\mf B}^{\mc C}(X) &= \#\{[L:K] = 2 : N(\mf F_{L/K}) \leq \frac{X}{N(\mf B)},\ \mf L_{L/K} = \mf B, \St(\O_L) = \mc C\}\\
   &= \rho_{\mf B}\#U/U^2\sum_{\mf Q \in Q} \frac{\lambda_{1}}{h_K} \frac{X}{N(\mf B)}+ O\left(\frac X{\log X}\right)\\
    &= \frac{\rho_{\mf B}2^{r_1+r_2} \#Q \lambda_{1}}{h_KN(\mf B)}  X + O\left(\frac X{\log X}\right)\\
    &= \frac 1 {h_K} \#E_{\mf B}(X) + O\left(\frac X{\log X}\right).
    \end{align*}
    Recall that $h_K = \#R_K$ for $\ell=2$.
    For both cases, summing over all $\mf B \in B$ completes the proof.
    \end{proof}

Leaving out the Steinitz class condition, we can sum $E_{\mf B}(X^{\ell-1})$ over $B$ as well to enumerate $E(X^{\ell-1})$.

\begin{cor}
\label{KEnum}
Let all notation be defined as above, then
    $$\#E(X^{\ell-1}) = \frac{\ell^{r_1+r_2} \#Q \lambda_{\ell-1}}{\ell-1} \left(\sum_{\mf B \in B} \frac{\rho_\mf B}{\sqrt[\ell-1]{N(\mf B)}}\right) X\log^{\ell-2} X  + O(X\log^{\ell-3} X).$$
\end{cor}

Results of this type are not new. Using class field theory, Cohen, Diaz y Diaz, and Olivier proved a general result for cyclic extensions of prime degree of which the following is a special case \cite{CohenDisc}.

\begin{thm}[Cohen-Diaz y Diaz-Olivier]
\label{CohenK}
    Let all notation be defined as above, then
    $$\#E(X^{\ell-1}) \sim \frac{\lambda_{\ell-1}'}{\ell^{r_2}} X\log^{\ell-2} X$$
where
$$\lambda_{\ell-1}' = \frac{\lambda_{\ell-1}}{\ell-1}\prod_{\mf q \mid \ell\O_K} \left(1+\frac{\ell-1}{N(\mf q)}\right).$$
\end{thm}

Our version is equivalent but slightly more complicated. Equating asymptotic constants in Corollary \ref{KEnum} and Theorem \ref{CohenK}, gives a new and interesting formula.

\begin{cor}
    Let all notation be as above, then
    $$\sum_{\mf B \in B} \frac{\rho_{\mf B}}{\sqrt[\ell-1]{N(\mf B)}} = \frac{1}{\ell^{r_1+2r_2}\#Q}\prod_{\mf q \mid \ell\O_K} \left(1+\frac{\ell-1}{N(\mf q)}\right).$$
\end{cor}

\begin{ex}
We consider this formula in the case that $\ell = 2$ and $K = \Q[x]/(x^3-x-9)$. Note that 2 is inert in $\O_K$ and $r_1 = r_2 = 1$. From Theorem \ref{Daberkow General}, the possible even parts of the discriminant are 1, 4 and 8. As shown in a previous section, $\rho_8 = \frac 1 2$. We can also find that $\rho_4 = \frac 7 {16}$ and $\rho_1 = \frac {1}{16}$ because 
$(\O_K/4\O_K)^\times \cong \Z/7\Z \times (\Z/2\Z)^3$ so $\frac 1 8$ of odd elements of $\O_K$ are squares mod 4. Additionally, $\#Q = 2$ because 1 and 2 are the only squarefree divisors of 2 in $\O_K$. We compute
\begin{align*}
\sum_{\mf B \in B} \left(\frac{\rho_{\mf B}}{N(\mf B)}\right) &= \left(\frac 1 {16} + \frac 7 {1024} + \frac 1 {1024}\right) 
=\frac 9 {128}\end{align*}
and
\begin{align*}
\frac 1 {(2^{r_1+2r_2}\#Q)}\left(\prod_{\substack{\mf q \mid 2\O_K\\ 
\mf q \text{ prime}}}1+\frac{1}{N(\mf q)}\right) = 
\frac 1 {2^3 (2)}\left(1+\frac 1 8\right) = \frac 9 {128}.
\end{align*}

\end{ex}

\section{Acknowledgments}
The author thanks Professor Siman Wong for his guidance throughout the process of writing this paper. They also thank Professor Adebisi Agboola for pointing out a typographical error in the introduction and Professor Anthony Kable for suggesting a simpler proof of Lemma \ref{Group product distribution}. In addition, they thank the anonymous referee for many helpful suggestions for improving the whole paper, and Section \ref{PK} in particular.

\section{Notation Index}
\begin{center}
\begin{longtable}{c|c|c} 
 Notation & Page & Definition\\
 \hline
 $N$ & \pageref{N} & The absolute value of $N_{K/\Q}$\\
 $R_K$ & \pageref{Main Theorem} & $R_K = \begin{cases} \text{Subgroup of } \left(\frac{\ell-1}{2}\right)\text{-powers in } \Cl(K)& \ell \neq 2\\ \Cl(K) & \ell = 2\end{cases}$\\
$[\mf A]$ & \pageref{Steinitz Formula Prop} & The ideal class of the ideal $\mf A$\\
 $\mf L_{L/K}$ & \pageref{L} & The $\ell$-factor of $\Delta_{L/K}$\\
 $\mf F_{L/K}$ & \pageref{F} & The prime-to-$\ell$-factor of $\Delta_{L/K}$\\
 $B$ & \pageref{L} & The set of possible $\mf L_{L/K}$ for $\ell$-Kummer extensions of $K$\\
 $U$ & \pageref{U} & The set of units in $\O_K$\\
 $\mf Q$ & \pageref{Steinitz Formula Prop} & The $\ell$-factor of $\gamma\O_K$\\
 $\mf R^{\ell}$ & \pageref{Steinitz Formula Prop} & The $\ell$-power-part of $\gamma\O_K$\\
 $\mf B$ & \pageref{mfQ} & Used to represent an element of $B$\\
 $\#E(X)$ & \pageref{E} & $\{L/K : \Gal(L/K) \cong \Z/\ell\Z,\ N(\Delta_{L/K}) \leq X\}$\\
$E^{\mc C}(X)$ & \pageref{E} & $\{L/K \in E(X) : \St(\O_L) = \mc C\}$\\
$E_{\mf B}(X)$ & \pageref{E} & $\{L/K \in E(X) : \mf L_{L/K} = \mf B\}$\\
$E_{\mf B}^{\mc C}(X)$ & \pageref{E} &  $E_{\mf B}(X) \cap E^{\mc C}(X)$\\
$\rho_{\mf B}^{\mc C}$ & \pageref{rho_q} & $\displaystyle\lim_{X\rightarrow \infty} \frac{\#\{L/K: \Gal(L/K) \cong \Z/\ell\Z,\ N(\mf F_{L/K}) \leq X^{\ell-1},\ \St(\O_L) = \mc C,\ \mf L_{L/K} = \mf B\}}{\#\{L/K: \Gal(L/K) \cong \Z/\ell\Z,\  N(\mf F_{L/K}) \leq X^{\ell-1},\ \St(\O_L) = \mc C\}}$\\
$\lf(\mf A)$ & \pageref{f} & The $\ell$-power-free-factor of the ideal $\mf A$\\
$P$ & \pageref{P} & Non-zero principal ideals in $\O_K$\\
$F$ & \pageref{P} & A fixed (non-canonical) map from $P \times U$ to $\O_K \wo \{0\}$\\
 $Q$ & \pageref{Choices} &  The set of $\ell$-power-free ideals with all prime factors dividing $\ell\O_K$ \\
 $J$ & \pageref{Choices} & The set of $\ell$-power-free, prime-to-$\ell$ ideals of $\O_K$\\
 $\Phi$ & \pageref{Choices} & $\Phi(u,\mc R, \mf Q,\mf I)$ gives an $\ell$-Kummer extension for principal $\mc R^{\ell}\mf Q\mf I$\\
$W$ & \pageref{W} & $\{\mf q : \ell \O_K \mid \mf q \text{ prime, } \nu_{\mf q}(\mf B) = \ell-1 + \ell\nu_{\mf q}(\ell)\}$\\ 
$W'$ & \pageref{W} & $\{\mf q : \ell \O_K \mid \mf q \text{ prime, } \nu_{\mf q}(\mf B) \neq \ell-1 + \ell\nu_{\mf q}(\ell)\}.$\\
$\mf W$ & \pageref{W} & $\prod_{\mf q \in W'} \mf q^{\ell\nu_{\mf q}(\ell)}$\\
$G$ & \pageref{GH}&  $(\O_K/\mf W)^\times$\\
$H$ & \pageref{GH} & $G/G^{\ell}$\\
$\O_K(\mf W)$ & \pageref{GH} & The set of $\gamma \in \O_K$ that are relatively prime to $\mf W$.\\
$\psi$ & \pageref{GH} & Composition of quotient maps from $\O_K(\mf W)$ to $H$.\\
$V$ & \pageref{V} & The free part of $U$\\
$T$ & \pageref{V} & Multiplicative semi-group $\O_K \wo \{0\}$\\
$Y$ & \pageref{V} & Set of cosets of $S/V^{\ell}$\\
$(r_1,r_2)$ & \pageref{D} & The number of real and complex pairs of embeddings\\
$M$ & \pageref{D} & Minkowski embedding\\
$D$ & \pageref{D} & Fundamental domain for $M(B)$\\
$\lambda_k$ & \pageref{Divisors} & $\displaystyle\frac{\left(\displaystyle\res_{s=1}\zeta_K(s)\right)^{k}}{(k-1)!} \left[\prod_{(\mf p,\ell\O_K)=\O_K} \left(1+\frac{k}{N(\mf p)}\right)\left(1-\frac 1 {N(\mf p)}\right)^{k}\right]\left[\prod_{\mf q \mid \ell\O_K} \left(1-\frac{1}{N(\mf q)}\right)^{k}\right]$\\
$\delta_{L/K}$ & \pageref{Steinitz Computation} & The discriminant of the relative trace form of $L/K$\\
\end{longtable}
\end{center}

\bibliography{Bibliography}

@article{KW, 
title={Uniform distribution of the Steinitz invariants of quadratic and cubic extensions}, 
volume={142}, 
number={1}, 
journal={Compositio Mathematica}, 
publisher={London Mathematical Society}, 
author={Kable, Anthony C. and Wright, David J.}, 
year={2006}, 
pages={84–100}}

@book{M,
  title={Number Fields},
  author={Marcus, D.A.},
  isbn={9783540902799},
  lccn={77021467},
  year={1977},
  publisher={Springer-Verlag}
}

@article{Artin,
    author = {Emil Artin},
    title = {Questions de base minimale dans la theoire des nombres algebriques},
    journal = {International conferences of the National Center for Scientific Research: Algebra and Number Theory},
    year = {1949}
}

@book{L,
    author = {Serge Lang},
    title = {Algebraic Number Theory},
    year = {1994}, 
    publisher = {Springer New York}
}

@book{Cohen,
  title={Advanced topics in computational number theory},
  author={Henri Cohen},
  year={2000},
  publisher = {Springer New York}
}

@article{D,
title = {On Computations in Kummer Extensions},
journal = {Journal of Symbolic Computation},
volume = {31},
number = {1},
pages = {113-131},
year = {2001},
issn = {0747-7171},
author = {Mario Daberkow}}

@article{Mc,
 ISSN = {00029939, 10886826},
 URL = {http://www.jstor.org/stable/2036117},
 author = {Leon R. McCulloh},
 journal = {Proceedings of the American Mathematical Society},
 number = {5},
 pages = {1191--1194},
 publisher = {American Mathematical Society},
 title = {Cyclic Extensions Without Relative Integral Bases},
 urldate = {2024-05-25},
 volume = {17},
 year = {1966}
}

@phdthesis{Foster,
    author ={Kurt Foster},
    title = {An equal-distribution result for galois module structure},
    school = {University of Illinois Urbana Champaign},
    year = {1987}
}

@article{CohenDisc,
author = {Cohen, Henri and Diaz y Diaz, Francisco and Olivier, Michel},
title = {On the density of discriminants of cyclic extensions of prime degree},
pages = {169--209},
volume = {2002},
number = {550},
journal = {Journal für die reine und angewandte Mathematik},
year = {2002},
}

@unpublished{Bhargava,
author = {Bhargava, Manjul and Shankar, Arul and Wang, Xiaoheng},
year = {2015},
title = {Geometry-of-numbers methods over global fields I: Prehomogeneous vector spaces},
note = {\url{https://arxiv.org/abs/1512.03035}}, 
}

@article{BQuart,
 author = {Manjul Bhargava},
 journal = {Annals of Mathematics},
 number = {2},
 pages = {1031--1063},
 publisher = {Annals of Mathematics},
 title = {The Density of Discriminants of Quartic Rings and Fields},
 volume = {162},
 year = {2005}
}

@book{LangA,
    author = {Serge Lang},
    title = {Algebra},
    publisher = {Springer},
    year = {2003}
}

@article{J,
author = {Jakimczuk, Rafael and Lalín, Matilde},
year = {2022},
month = {10},
pages = {617-634},
title = {Asymptotics of sums of divisor functions over sequences with restricted factorization structure},
volume = {28},
journal = {Notes on Number Theory and Discrete Mathematics},
doi = {10.7546/nntdm.2022.28.4.617-634}
}

@article{Agboola,
    title={On counting rings of integers as Galois modules}, 
    author={A. Agboola},
    journal = {Journal f\"ur die reine und angewandte Mathematik},
    year={2012},
    publisher = {De Gruyter},
    volume = {663},
    pages = {1-31}
}

@book{T,
    title = {Introduction to Analytic and Probabilistic Number Theory},
    author = {G\'erald Tenenbaum},
    publisher = {American Mathematical Society},
    year = {2008}}

@book{N,
    title = {Elementary and Analytic Theory of Algebraic Numbers},
    author = {W. Narkiewicz},
    publisher = {Springer},
    year = {2004}}

@article{Bekyel,
author = {Ebru Bekyel},
title = {Steinitz classes and discriminant counting},
journal = {Acta Arithmetica},
volume = {118},
pages = {27-40},
number = {1},
year = {2005}}

@unpublished{GW,
      title={Counting biquadratic number fields with quaternionic and dihedral extensions}, 
      author={Louis M. Gaudet and Siman Wong},
      year={2025},
      note = {\url{https://arxiv.org/abs/2506.21522}}}

@article{Murty,
    author = {M.R. Murty, J. Sahoo and A. Vatwani} ,
    title = {A simple proof of the Wiener-Ikehara Tauberian Theorem},
    journal = {Expositiones Mathematicae},
    year = {2024},
    volume = {42}
}
\bibliographystyle{plain}
\end{document}